\newcommand{\ben}{\begin{enumerate}}
\newcommand{\een}{\end{enumerate}}
\newcommand{\tf}{\tilde f}
\newcommand{\T}{ \mathrm{T}}
\def\wh{\widehat}
\def \bc{\vskip 0cm \begin{center}}
\def \ec{\end{center} \vskip 0cm}
\def \be{\begin{equation}}
\def \ee{\end{equation}}
\def \bea{\begin{eqnarray}}
\def \eea{\end{eqnarray}}
\newtheorem{thm}{Theorem}[section]
\newtheorem{cor}[thm]{Corollary}
\newtheorem{lem}[thm]{Lemma}
\newtheorem{prop}[thm]{Proposition}
\theoremstyle{remark}
\newtheorem*{rem}{Remark}
\newtheorem{Def}[thm]{Definition}
\newtheorem{exm}{Example}
\begin{document}
\pagestyle{plain}
\begin{center}
\vspace{4cm}

{\Large {\bf Dynamics of shear homeomorphisms of tori and the Bestvina-Handel algorithm
}}\vskip 1cm
\large{Tali Pinsky and Bronislaw Wajnryb}
\end{center}
\vskip 2cm

\section*{Abstract}
Sharkovskii proved that the existence of a periodic orbit of period which is not a power of 2 in a one-dimensional dynamical system implies existence of infinitely many periodic orbits. We obtain an analog of  Sharkovskii's theorem for periodic orbits of shear homeomorphisms of the torus. This is done by obtaining a dynamical order relation on the set of simple orbits and simple pairs.
We then use this order relation for a global analysis of a
quantum chaotic physical system called the kicked
accelerated particle.

\section{Introduction}
Given a dynamical system $(X,f)$, a key question is which periodic orbits exist for this system. Since periodic orbits are in general difficult to compute, we would like to have the means to deduce their existence without having to actually compute them. 

Sharkovskii addressed the dynamics of continuous maps on the real line. He defined an order $\lhd$ on the natural numbers, Sharkovskii's order (see \cite{Misiurewicz}), and proved that the existence of a periodic orbit of a certain period $p$ implies the existence of an orbit of any period $q\lhd p$. We say the $q$ orbit is \emph{forced} by the $p$ orbit. This offers the means of showing the existence of many orbits if one can find a single orbit of ``large" period. For a dynamical system depending on a single parameter, if periodic orbits appear when we change the parameter, they must appear according to the Sharkovskii's order.  Hence, Sharkovskii's theorem gives the global structure of the appearance of periodic orbits for one dimensional systems. 
Ever since the eighties there has been interest in obtaining analogs for Sharkovskii's theorem for two dimensional systems (see \cite{Boy3} and \cite{Matzuoka}). 

A homeomorphism of a torus is said here to be of {\it shear type} if it is isotopic to one Dehn twist along a single closed curve. 
Let $h$ be a shear homeomorphism, and let $x$ be a periodic orbit of $h$. We
can then define the \emph{rotation number} of $x$, see discussion in Section \ref{simple section}. Thus, a rational number in the unit interval $[0,1)$ is associated to each orbit. 

We consider orbits up to conjugation: orbits $(x,f)$  and  $(y,g)$ are
\emph{similar} (of the same type) if there exists a homeomorphism $h$ of the torus $T^2$ such that $h$ is isotopic to the identity, $h$ takes
orbit $x$ onto orbit $y$  and   $hfh^{-1}$ is isotopic to $g$ rel $y$. 
We define below a specific family of periodic orbits we call simple orbits. In this family there is a unique element up to similarity corresponding to each rotation number; hence they can be specified by their rotation numbers. We emphasize it is not true in general that an orbit of a shear homeomorphism is characterized by its rotation number.

Simple orbits are analyzed in Section \ref{simple section}. As it turns out (see Remark in section 2), one simple orbit is indeed simple and does not force the existence of any other orbit. More generally a periodic orbit is of {\em twist type} if it does not force the existence of any orbit of different type with the same rotation number. It is tempting to conjecture that the simple orbits are the only orbits of twist type, but Lemma 2.4 shows that this is false. We give there an example of an orbit of twist type which is not simple. This example also shows that a periodic orbit with a given rotation number does not necessarily force a simple orbit of the same rotation number. 

 We turn in Section \ref{pairs} to analyze pairs of orbits. Two coexisting simple periodic orbits can form a simple pair and these are considered. The pairs do force some more interesting dynamics, as follows. We denote the integers by letters $p,q$ and the rational numbers by $r,s,t$ possibly with indices. For a pair of simple orbits of rotation numbers $\frac{q_1}{p_1}$ and $\frac{q_2}{p_2}$ to constitute a simple pair, it is necessary that the rotation numbers be Farey neighbors, i.e. $|p_2q_1-p_1q_2|=1$. We denote such a pair of rational numbers by $\frac{q_1}{p_1}\vee\frac{q_2}{p_2}$. 

We now define an order relation on the following set $\mathcal{P}$ of rational numbers and pairs in the unit interval,
\begin{eqnarray*}
\mathcal{P}=\{r| r\in\mathbb{Q}\cap[0,1)\}\cup\{r\vee s|r,s\in\mathbb{Q}\cap[0,1)\}.
\end{eqnarray*}
Define the order relation on $\mathcal{P}$ to be 
\begin{eqnarray*}
r\vee s \succcurlyeq t
\Leftrightarrow
t\in [
r,s]
\end{eqnarray*}
and
\begin{eqnarray*} 
r_1\vee r_2 \succcurlyeq s_1\vee s_2
\Leftrightarrow
s_1,s_2\in [r_1,r_2] .
\end{eqnarray*}
where we denote by $[r_1,r_2]$ the interval between $r_1$ and $r_2$, regardless of their order.
\begin{thm} \label{mainthm} The order relation $\succcurlyeq$ on $\mathcal{P}$ describes the dynamical
forcing of simple periodic orbits. Namely, the existence of a simple pair of periodic orbits with rotation numbers $r\vee s$ in $\mathcal{P}$ implies the
existence of all simple orbits and simple orbit pairs of rotation numbers smaller than
$r\vee s$
according to this order relation .
\end{thm}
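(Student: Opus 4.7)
The plan is to combine train track theory via the Bestvina-Handel algorithm with the Farey/Stern-Brocot structure of rationals in $[r,s]$, inducting on the depth of a given rational in the Farey subtree rooted at $r \vee s$. At each stage the induction step produces a new simple orbit at the Farey mediant of the pair currently in hand; repeated application of this step fills out $[r,s] \cap \mathbb{Q}$.

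\smallskip

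For the setup, let $f$ be a shear homeomorphism carrying a simple pair $P$ with rotation numbers $r = q_1/p_1$ and $s = q_2/p_2$; by the Farey-neighbor condition discussed in Section \ref{pairs} these satisfy $|p_2 q_1 - p_1 q_2| = 1$. I would consider the isotopy class of $f$ relative to $P$ and run the Bestvina-Handel algorithm on it, producing a train track map $\phi$ that is either pseudo-Anosov rel $P$ or admits a reduction. The hypothesis that $P$ is a \emph{simple} pair (as opposed to a more complicated coexistence) should rule out any essential reducing curve compatible with both orbits, so $\phi$ is pseudo-Anosov. Standard train track dynamics then provide a periodic-point-counting tool: each periodic word in the edges of the train track yields a periodic orbit of $\phi$, which by Nielsen-Thurston persists under isotopy back to $f$.

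\smallskip

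The base case of the induction is to show that $\phi$ carries a simple periodic orbit of rotation number equal to the Farey mediant $m = (q_1 + q_2)/(p_1 + p_2)$. Working in the universal cover of $T^2 \setminus P$, I would identify a preferred edge-loop in the train track whose image under $\phi^{p_1 + p_2}$ translates by $q_1 + q_2$; the structure imposed by the Dehn-twist isotopy class of a shear forces such a loop to exist between the two lifted orbits. This produces a periodic point whose rotation number is $m$, and its combinatorial position between the lifted orbits of $P$ matches the description of the unique simple orbit of rotation number $m$ from Section \ref{simple section}, so it is indeed simple. With this new orbit in hand, $m$ is a Farey neighbor of both $r$ and $s$, so checking the compatibility criteria of Section \ref{pairs} yields two new simple pairs $r \vee m$ and $m \vee s$. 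The inductive hypothesis applied to each covers $[r,m]$ and $[m,s]$, which together exhaust $[r,s]$; since every rational in $[r,s]$ and every Farey pair inside it appear at finite depth in the Stern-Brocot subtree with root $r \vee s$, the induction reaches everything demanded by $\succcurlyeq$.

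\smallskip

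The main obstacle I expect is exactly this base case, in two parts: first, guaranteeing that the Bestvina-Handel output truly forces an orbit of rotation number $m$ rather than merely admitting one (this requires a transition-matrix / efficient fibered surface argument that no cancellation collapses the loop), and second, verifying that the forced orbit is simple rather than some alternative orbit with the same rotation number (by Lemma 2.4 the rotation number alone does not pin down the type). A cleaner route, if the direct train track analysis becomes unwieldy, is to construct an explicit canonical pseudo-Anosov model realizing the simple pair $r \vee s$ with exactly the predicted forced set --- for example by an appropriate affine torus map --- and then invoke the Asimov-Franks type result that any homeomorphism in the same mapping class rel $P$ contains at least the periodic orbits of the canonical model.
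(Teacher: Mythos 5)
Your plan follows the same overall route the paper takes: apply Bestvina--Handel to the graph of the simple pair to get a pseudo-Anosov rel the pair, extract a periodic orbit at the Farey mediant $m=(q_1+q_2)/(p_1+p_2)$, show that this orbit is simple and that it forms simple pairs with each of the two original orbits, induct on the Stern--Brocot subtree, and invoke Asimov--Franks/Hall to transfer the orbits from the canonical form back to an arbitrary homeomorphism in the isotopy class. So the architecture is right.

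However, you underestimate the technical content of the base step in a way that matters. You flag, correctly, that it is not enough to produce an orbit of rotation number $m$ --- one must show it is \emph{simple}, and rotation number alone does not determine the type (Lemma \ref{twist} in the paper gives a twist-type orbit that is not simple). The paper handles this by extracting the explicit orbit $O_{1,1}$ from a Markov partition of the train track, then constructing by hand an $f$-invariant family of vertical loops separating the points of $O_{1,1}$, so that Lemma \ref{invariant loops} applies. Your phrase ``its combinatorial position \dots matches the description of the unique simple orbit'' does not substitute for this construction; the hard part is precisely to produce the invariant loops.

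The larger gap is the step you treat as routine: passing from ``$m$ is a Farey neighbor of $r$ and of $s$'' to ``$r\vee m$ and $m\vee s$ are simple pairs.'' The Farey-neighbor condition is only a \emph{necessary} condition for a simple pair; the paper explicitly cautions that two coexisting simple orbits whose rotation numbers are Farey neighbors need not form a simple pair, and notes that deciding this in general is hard. The paper's proof of Theorem \ref{mainthm} spends most of Section \ref{order} carrying out two separate geometric constructions (the loops $A$, $B$, $C$, $D$ in Figures \ref{new_orbit}--\ref{new and x}): once filling in the $x$-orbit to show $\{y, O_{1,1}\}$ is a simple pair, and once filling in the $y$-orbit to show $\{x, O_{1,1}\}$ is a simple pair. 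Without some replacement for this analysis, ``checking the compatibility criteria of Section \ref{pairs}'' does not close the induction, since the inductive hypothesis only applies to simple pairs and you have not established that you have one. Your proposed fall-back of building an explicit canonical pseudo-Anosov model realizing $r\vee s$ with a known forced set is a legitimate alternative strategy, but it replaces rather than discharges the same verification and is not what the paper does.
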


This is the main result of this paper and the proof is completed
in Section \ref{order}. The idea of the proof is as follows. Consider the torus punctured on one or more periodic orbits of a homeomorphism $h$. Then $h$ induces an action on this punctured torus, and on its (free) fundamental  group. Now apply the Bestvina-Handel algorithm to this dynamical system. The idea of using the Bestvina-Handel algorithm was used by Boyland in \cite{Boy1} and he describes the general approach in \cite{Boy2}. In our case, after puncturing out a simple pair of orbits, applying the algorithm yields an isotopic homeomorphism which is pseudo-Anosov. The algorithm also offers a Markov partition for this system and we use the resulting symbolic representation to find that there are periodic orbits of each rotation number between the pair of numbers we started with. Then we directly analyze the structure of the pseudo-Anosov representative to show that all these orbits are in fact simple orbits. Furthermore any two of them corresponding to rotation numbers which are Farey neighbors form a simple pair. Finally we establish the isotopy stability of these orbits using results of Asimov and Franks \cite{AF} and Hall \cite{Hall}. Thus, the orbits exist for any homeomorphism for which the simple pair exists, and are forced by it.

One should compare this result with a very strong theorem of Doeff (see Theorem \ref{Doeff's theorem}), where existence of two periodic orbits of different periods for a given shear homeomorphism $h$  implies existence of periodic orbit of every intermediate rotation number. However an explicit description of these orbits is not given, while our stronger assumptions imply existence of simple periodic orbits and simple pairs of orbits. It may be true that the existence of any two periodic orbits with different rotation numbers implies the existence of a simple orbit with any given intermediate rotation number, but we feel that the evidence is not strong enough to make a conjecture either way, in particular in  view of Lemma 2.4. Even more difficult question is to determine whether there exists a simple pair of periodic orbits in the situation of Doeff Theorem. First one should try to find a pair of simple orbits which is not a simple pair while the rotation numbers are Farey neighbors, but it is very difficult to understand the geometry of the pseudo-Anosov homeomorphism which arises in this situation.

This research was originally motivated by a question we were asked by Professor Shmuel Fishman: Is there a topological explanation for the structure of appearance of accelerator modes in the kicked particle system.
In section \ref{global} we give a description of the kicked particle system. This system turns out to be described precisely by a family of shear homeomorphisms of the torus. The existence of accelerator modes is equivalent to existence of periodic orbits. The global structure of this system is given by the order relation in Theorem \ref{mainthm}, while it cannot be directly computed due to the complexity of the system.

The authors would like to thank Professor Shmuel Fishman for offering valuable insights, Professor Italo Guannieri for some critical advice, and Professor Philip Boyland for many  indispensable conversations.

\section{Simple orbits \label{simple section}}

Let $\{x_1,.....,x_N\}$ be a set of points belonging to one or more
periodic orbits for a homeomorphism $f$ of a surface $S$.
The dynamical properties of this set of orbits are captured by the induced action of $f$ on the complement
$S_0=S\setminus\{x_1,....,x_N\}$ in a sense that will shortly become clear. Choose any graph $G$ which is a deformation retract
of the punctured surface $S_0$. A homeomorphism of $S_0$ then induces
a map on $G$. The converse is also true: a given
map of $G$ determines a homeomorphism of $S_0$ up to isotopy.
Therefore we specify the
periodic orbits we analyze in terms of the action on a graph which
is a deformation retract of the surface after puncturing out the
orbit. 

Denote by $G_N$ the graph obtained by attaching $N$ small loops to the standard unit circle at the points $\exp(\frac{2\pi j}{N}i)$, $j=0,\hdots,N-1$.

\begin{Def} We call a periodic orbit $x=\{x_1,\hdots,x_N\}$ for a shear homeomorphism $f$ on the
two-torus a \emph{simple orbit} if the following hold. \ben

\item
There can be found a graph $G$ which is a deformation retract of $\T_0= \mathrm{T}^2\setminus x$ as on Figure \ref{simple} such
that $G$ is homeomorphic to $G_N$.
\begin{figure}[h]\centering
\includegraphics[width=5cm]{./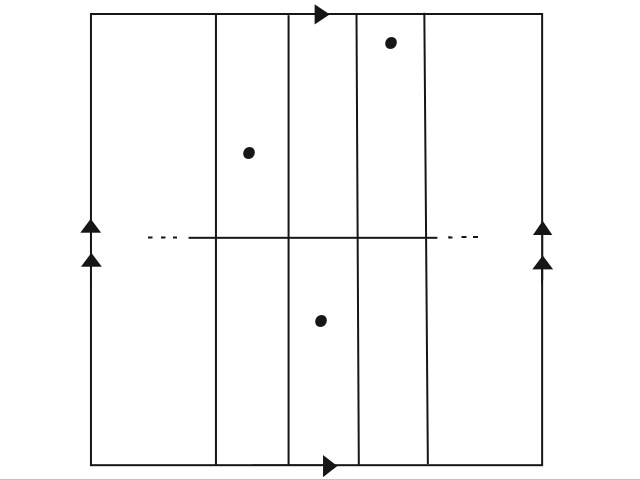}
\caption{A standard graph for a simple orbit}\label{simple}
\end{figure}\\
We call the loop in $G$ corresponding to the unit circle the \emph{horizontal loop} and the loops attached to it the \emph{vertical loops}.
\item There exists a homeomorphism $\tf$ of $\T_0$ isotopic to $f$ rel $x$ (i.e. the isotopy is fixed on $x$) so that a neighborhood of $G$ is invariant under $\tf$
and the induced action on $G$ satisfies: (a) There exists a fixed number $k\in\{0,..,N-1\}$ such that each vertical loop is mapped $k$ loops forwards (clockwise along the unit circle) to another vertical loop. (b)  The horizontal loop
is mapped to itself with one twist around one of the vertical loops.
\begin{figure}[h]\centering
\includegraphics[width=5cm]{./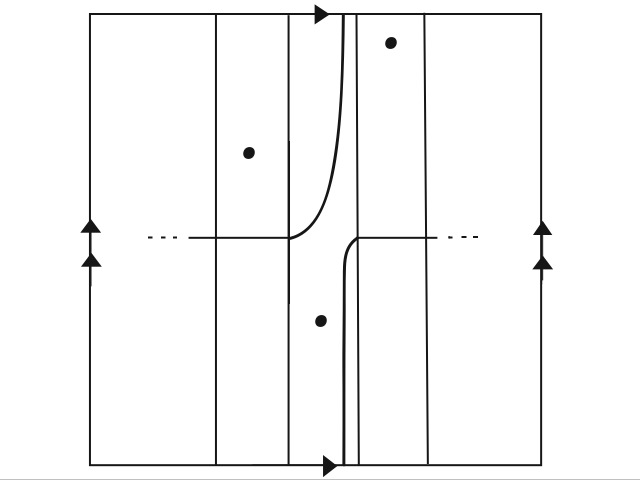}
\caption{The action on a standard graph for a simple orbit}\label{action}
\end{figure}\\
\een\end{Def}

\begin{rem} A homeomorphism $h$ for which we are given a simple
periodic orbit must be of shear type as we can deduce from the action
on the homology of the non-punctured torus. \end{rem}

For a shear homeomorphism there exists a basis for the first homology for which the induced map is represented by the matrix $\left(%
\begin{array}{cc}
  1 & 0 \\
  1 & 1 \\
\end{array}%
\right)$. 
From here on we refer to  any two axes given by an homology  basis that gives us the above representation as {\it standard axes}. The horizontal loop and one of the vertical loops in a graph for a simple orbits constitute a standard basis.

\begin{Def} 
 Let $h$ be a shear homeomorphism, and let $\hat{h}$ be a lift of $h$ to the universal cover (a plane). For any periodic point $x$ of $h$ of period $p$, $\hat{h}^p$ maps any lift $\hat{x}$ of $x$ the same integer number $q$ along the horizontal axis away from $\hat{x}$, in a standard choice of axis (and $\hat{x}$ is possibly mapped some integer number along the vertical axis as well). We can then define the \emph{rotation number} of $x$ to be $\rho(x)=\frac{q}{p} \mod1$.  The rotation number does not depend on the lift $\hat{h}$ of $h$. 

(The rotation number is often define relative to the given lifting of the homeomorphism $h$ and is not computed modulo 1, but we want it to depend only on the orbit and not on the lifting. In particular we want a simple orbit to have well defined rotation number independent of the lifting of $h$.)
\end{Def}

\begin{rem} In the case of a homeomorphism isotopic to a Dehn twist on a torus, which is our interest here, it can be easily shown that the abelian Nielsen type equals exactly the rotation number defined above. \end{rem}
 
There exists a simple orbit for any given rational rotation number $r\in [0,1)$, and it is unique up to similarity. Denote the similarity class by $\hat{r}$.
In the following we use the word vertical to describe the $y$ axis, in a standard choice of axis for $f$ (the direction along which the twist is made).

\begin{lem} \label{invariant loops} Let $x$ be a periodic orbit for a shear-type
homeomorphism $f$ of $\mathrm{T}^2$ for which there
exists a family of vertical loops such that they bound a set of
annuli each containing one point of the periodic orbit, and
this family is invariant under a homeomorphism $\tf$ isotopic to
$f$ rel $x$. Then $x$ is a simple orbit.
\end{lem}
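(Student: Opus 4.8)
The plan is to turn the given invariant family of vertical loops into the standard graph $G_N$ together with its standard self-map, and then read off the two conditions in the definition of a simple orbit. Write $x=\{x_1,\dots,x_N\}$ and let $c_1,\dots,c_N$ be the given vertical loops; being disjoint essential simple closed curves of the vertical slope they are mutually parallel and cut $\T^2$ into annuli $A_1,\dots,A_N$, with $c_i,c_{i+1}$ bounding $A_i$ and $x_i\in A_i$ (indices mod $N$). Since $\tf$ is isotopic to a Dehn twist about a vertical curve it fixes the vertical homology class together with its orientation, hence descends to an orientation-preserving self-homeomorphism of the circle of vertical leaves; therefore it preserves the cyclic order of the parallel curves $c_1,\dots,c_N$, and being a permutation of this family it acts on it as a rotation $\tf(c_i)=c_{i+k}$ for a fixed $k\in\{0,\dots,N-1\}$. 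Consequently $\tf(A_i)=A_{i+k}$ and $\tf(x_i)=x_{i+k}$. After a preliminary isotopy of $\tf$ supported near $\bigcup c_i$ and fixing $x$, I may assume $\tf$ carries each $c_i$ to $c_{i+k}$ affinely and carries a fixed tubular neighborhood of $\bigcup c_i$ onto itself.

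Next I would build the graph. Let $v_i$ be a push-off of $c_i$ slightly into $A_i$ towards $x_i$, chosen inside the invariant neighborhood of $\bigcup c_i$ so that the $v_i$ are disjoint vertical loops, $x_i$ lies in the region between $v_i$ and $v_{i+1}$, and $\tf(v_i)=v_{i+k}$. Choose a horizontal loop $H$ meeting $\bigcup v_i$ transversely, crossing each $v_i$ once in their cyclic order (so $H$ is the standard horizontal circle up to isotopy), meeting each complementary region in a single essential spanning arc, and disjoint from $x$. Put $G=H\cup v_1\cup\dots\cup v_N$. Then $G$ has $N$ vertices $p_i=H\cap v_i$, each of valence four, and $2N$ edges (the $N$ subarcs of $H$ cut off by the $p_i$, which form an $N$-cycle, together with the $N$ loops $v_i$); hence $G\cong G_N$, with $H$ the horizontal loop and the $v_i$ the vertical loops. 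Moreover $\T^2\setminus(G\cup x)$ is a disjoint union of $N$ once-punctured discs, one around each $x_i$, so $G$ is a deformation retract of $\T_0$. This verifies condition~(1), and by construction $\tf$ permutes the vertical loops of $G$ by $v_i\mapsto v_{i+k}$, which is the first half of condition~(2).

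It then remains to isotope $\tf$ rel $x$, keeping $\bigcup v_i$ invariant, so that some neighborhood of $G$ becomes invariant and $H$ is carried to itself with exactly one twist around one vertical loop. Cutting $\T^2$ along $\bigcup v_i$ and deleting small discs about the $x_i$ yields $N$ once-punctured annuli cyclically permuted by $\tf$, on each of which $H$ restricts to an essential spanning arc. Such arcs are classified up to isotopy rel endpoints by the amount they twist about the core and about the puncture, and the total of the core-twists accumulated as one runs once around the cyclic pattern is an isotopy invariant of $\tf$; since $\tf\simeq f$ is a single Dehn twist about a vertical curve this total is $\pm1$. I would therefore isotope $\tf$ rel $x$ and rel $\bigcup v_i$, inside a neighborhood of $G$, so that it takes the spanning arc in each region to the next one without twisting in every region but one and with a single twist in the remaining one. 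Carried out within a fixed neighborhood of $G$, this makes that neighborhood invariant and makes the induced map on $G$ fix $H$ up to one twist about a single $v_i$, which completes condition~(2); hence $x$ is a simple orbit.

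The step I expect to be the main obstacle is the last one: one must genuinely control the isotopy so that $\tf$ simultaneously preserves a neighborhood of $G$ and realizes the action on the horizontal loop as \emph{exactly one} twist. The ``exactly one'' is the sole quantitative input, and it is forced by the standing hypothesis that $f$ is of shear type --- isotopic to a single Dehn twist --- which pins the algebraic twisting of the horizontal loop to $\pm1$; the remainder is a routine normal-form argument for a homeomorphism relative to an invariant spine, of the kind implicit in the ``map of $G$ determines a homeomorphism up to isotopy'' principle recalled at the start of this section.
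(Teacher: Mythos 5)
Your proof follows the same overall strategy as the paper's: first deduce from orientation-preservation that the invariant vertical circles are permuted cyclically by a fixed shift, then build a horizontal loop crossing each vertical loop once, check that the resulting embedded graph is $G_N$ and a deformation retract of $\T_0$, and finally use the homological constraint coming from $f$ being a single Dehn twist to pin the twisting of the horizontal loop. The setup (parallel curves cut $\T^2$ into annuli, $\tf$ acts as a rotation on them) and the graph construction are essentially the same as the paper's.

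There is, however, a real gap in the last step, and it is exactly the one you flag as the main obstacle. Because you take the arcs of $H$ to live in the \emph{once-punctured} annuli bounded by the push-offs $v_i$, the spanning arcs are classified (rel endpoints) by winding about the core \emph{and} about the puncture $x_i$. Your homology argument controls the total core winding and shows it is $\pm 1$; but nothing in your argument addresses the puncture winding, and the concluding ``normal form'' isotopy cannot simply make it vanish, because winding around $x_i$ is an isotopy invariant of the arc rel $x_i$. The paper's proof is organized precisely so that this never arises: it introduces a second family of vertical circles $m_i$, each passing through the corresponding point $x_i$ of the orbit, and constructs the horizontal loop $n$ through an auxiliary periodic orbit $\{a_i\}$ lying on the $m_i$'s. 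Since each $m_i$ contains $x_i$, the annuli between consecutive $m_i$'s carry no punctures; hence the ``return error'' (the discrepancy between $\tf(n_p)$ and $n_1$) sits in a puncture-free annulus, where the only ambiguity is the power of the core Dehn twist, and homology fixes that power to be one. Your argument can be repaired either by adopting that choice of horizontal loop, or by explicitly invoking that a Dehn twist about a curve bounding a once-punctured disk is trivial in the mapping class group of the punctured torus, so any residual winding about a single $x_i$ can be isotoped away rel $x$ while keeping the $v_i$'s invariant --- but as written, the step is not justified.
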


\begin{proof} Choose a vertical loop $l$ of the invariant family. $f$ is orientation preserving, and so is $\tf$. The first loop to the right of $l$ is therefore mapped to the first loop to the right of $\tf(l)$. Hence, the vertical loops in the invariant family are all mapped the same number of loops to the right.

Now we have to find a horizontal line with the desired image. We
write the invariant family of loops as 
$\{l_i\}_{i=1}^{p}$, where $p=period(x)$, ordered along the horizontal axis. 
We choose another family of vertical loops $\{m_i\}_{i=1}^{p}$, such that $m_i$ is contained in the annulus between $l_i$ and $l_{i+1}$ ($l_{p+1}=l_1$), and passes through the periodic point $x_i$ also contained in this annulus. 
Choose a point $a_1\neq x_1$ on $m_1$. $f$ can be adjusted in such a way that the
new
homeomorphism $\tf$ leaves both families of vertical loops invariant, and in addition, so that $a_1$ be a periodic point of $\tf$ with period $p$. We denote the orbit of $a_1$ by $\{a_i\}_{i=1}^{p}$ where $a_i\in m_i$ for $1\leq i\leq p$. 

Choose a line segment $n_1$ connecting $a_1$ to $a_2$,
so it crosses the annulus between $m_1$ and $m_2$ from side to side. 
We choose $n_{j+1}$ to be the
line segment $\tf^j(n_1)$ for $1\leq j \leq p-2$. The boundary points of $n_j$ and $n_k$ coincide
whenever they lie on the same vertical loop.

Now, we look at the horizontal loop $n=\bigcup_{j=1}^{p}n_j$. Each segment of $n$ is mapped exactly to the next segment, except $n_{p-1}$ which is mapped into the annulus between
$m_1$ and $m_2$.
Since the mapping class group of an annulus is generated by a twist with respect to any  loop going once around the annulus we may assume, that $n_p$ is  mapped to $n_1$ plus a number of twists along such a loop. On the other hand we know that $f(n)$ is homotopic to itself plus one twist in the negative direction (on the closed torus), so $\tf$ maps n to itself plus one negative twist along this loop. By further adjustment of $\tf$ we may assume the twist is  made along $l_2$.
Thus the union of the vertical family $\{l_i\}$ with $n$ chosen as above constitute a graph showing $x$ to be a simple orbit.\end{proof}

The Thurston-Nielsen classification theorem, see \cite{thurston}, states that any homeomorphism $f$ on a closed connected oriented surface of negative Euler characteristic is isotopic to a homeomorphism $\tf$ which is\ben

\item pseudo Anosov, or

\item of finite order, or

\item reducible.

\een

where a homeomorphism $\phi$ is called \emph{of finite order} if
there exists a natural number $n$ such that $\phi^n=id$. A
homeomorphism $\phi$ is called \emph{pseudo-Anosov} if there
exists a real number $\lambda>0$ and a pair of transverse measured
foliations $(\mathcal{F}^u,\mathcal{\mu}^u)$ and
$(\mathcal{F}^s,\mathcal{\mu}^s)$ with
$\phi(\mathcal{F}^u,\mathcal{\mu}^u)=(\mathcal{F}^u,\lambda
\mathcal{\mu}^u)$ and
$\phi(\mathcal{F}^s,\mathcal{\mu}^s)=(\mathcal{F}^s,\frac{1}{\lambda}
\mathcal{\mu}^s)$. A homeomorphism $\phi$ on a surface $M$ is
called \emph{reducible} if there exists a collection of pairwise
disjoint simple closed curves $\Gamma=\{\Gamma_1,...,\Gamma_k\}$
in $int(M)$ such that $\phi(\Gamma)=\Gamma$ and each component of
$M\setminus \Gamma$ has a negative Euler characteristic.
The representative $\tf$ in the isotopy class of $f$ which is of one of
the three forms above is called the Thurston-Nielsen canonical
form of $f$. 

When the surface has a finite number of punctures and $\phi$ permutes the punctures then the same is true except that in the case of pseudo-Anosov map we treat the punctures as distinguished points (there is a unique way to extend a homeomorphism to the distinguished points) and we allow an additional type of singularities of the measured foliations, the 1-prong singularities at the distingushed points (See \cite{Hall}, and section 0.2 of \cite{BH}).

Of course homeomorphism  $f$ is reducible with respect to a simple
orbit since it contains an invariant family of loops $\{l_i\}$ and the
complement of the invariant family consists of punctured annuli
(which have negative Euler characteristic). 

\begin{rem} A homeomorphism $f$ with a simple orbit $x$ can be constructed in such a way that $x$ is the only periodic orbit of $f$. The invariant set of vertical loops is evenly spaced with the distance between the consecutive loops equal to 1/p. The loops are moved by q/p to the right and by fixed irrational number downward. Punctures (the points of the periodic orbit) are also evenly spaced and have the same height. The vertical lines containing punctures are moved by q/p to the right. The punctures  keep their height and all other points of the loop move a little downwards. Every other vertical line is moved to another vertical line by a little more than q/p to the right (not all lines by the same distance). 
\end{rem}

\begin{exm} \label{example}
Not every periodic orbit for a shear homeomorphism is reducible.
Consider the homeomorphism $h$ described on
Figure \ref{exm_fig}. It takes the graph on the left of Figure  \ref{exm_fig} to the graph on
the right and is a shear homeomorphism. It has a periodic orbit of
order 2, shown on the pictures, with rotation number $1/2$ and it is
pseudo-Anosov in the complement of the orbit.
\begin{figure}[htp]
\centering
\includegraphics[width=6cm]{./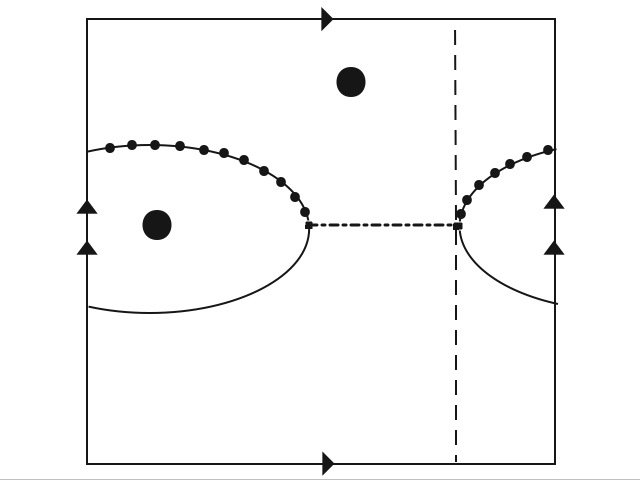}
\hfill
\includegraphics[width=6cm]{./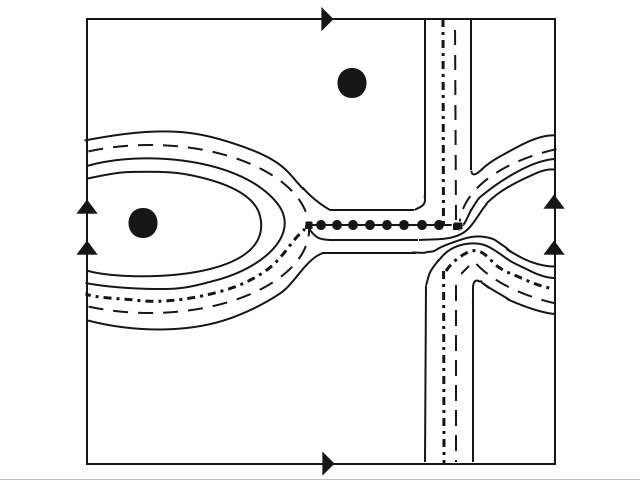}
\caption{}\label{exm_fig}
\end{figure}\\
\end{exm}

\begin{lem} \label{twist} There exists an orbit of twist type for a shear homeomorphism of the torus which is not simple. 
\end{lem}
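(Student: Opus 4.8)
The plan is to exhibit an explicit example, and the most natural candidate given the above is the period-$2$ orbit $x$ of the shear homeomorphism $h$ of Example \ref{example}: it has rotation number $1/2$ and its complement carries a pseudo-Anosov representative. First I would check that $x$ is not simple: as noted above, every shear homeomorphism is reducible relative to a simple orbit, the invariant vertical loops of the simple structure forming a reduction system whose complementary pieces are punctured annuli; but $h$ is pseudo-Anosov rel $x$, hence not reducible rel $x$, so $x$ cannot be simple. It remains to prove that $x$ is of twist type, i.e.\ that $h$ forces no periodic orbit of a type different from that of $x$ carrying rotation number $1/2$.

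I would reduce this to the Thurston--Nielsen canonical representative. Let $\tf$ be the pseudo-Anosov representative of $h$ rel $x$. By the Thurston--Nielsen classification together with the isotopy-stability of pseudo-Anosov maps (Asimov--Franks \cite{AF}, Hall \cite{Hall}), the orbits forced by $(h,x)$ are exactly the periodic orbits of $\tf$. Hence it suffices to show that every periodic orbit of $\tf$ with rotation number $1/2$ is similar to $x$; in particular this will give that $\tf$ carries no simple orbit of rotation number $1/2$, which is the additional assertion promised in the introduction.

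To analyse $\tf$ I would use the output of the Bestvina--Handel algorithm on $(h,x)$: an invariant train track $\tau$, the train-track map $g:\tau\to\tau$, and the resulting Markov partition, giving a subshift of finite type conjugate to the non-wandering dynamics of $\tf$. Label each edge of $\tau$ (equivalently each Markov rectangle) by its horizontal homological displacement under a fixed lift $\hat{\tf}$ to the plane; then the rotation number of a periodic orbit is the time-average of these labels along its itinerary. The crucial point is to show that $1/2$ is an extreme value of the rotation interval of $\tf$ — for the chosen lift the left endpoint, with the interval contained in $[1/2,1)$ — so that the periodic itineraries of average label $1/2$ all lie in a proper invariant subgraph $\tau_0\subset\tau$ (the support of the extremal invariant measure). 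For our small example $\tau_0$ can be written down explicitly, and one checks it carries only the orbit $x$. Then $\tf$ has no periodic orbit of period $\neq 2$ with rotation number $1/2$ and no period-$2$ orbit other than $x$, and combining this with the previous paragraph shows $x$ is of twist type while not being simple.

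The hard part is precisely this rotation-interval step: computing the rotation interval of the pseudo-Anosov map explicitly, proving that its extreme value $1/2$ is confined to the subgraph $\tau_0$, and verifying that $\tau_0$ supports only $x$ up to similarity. This is a concrete but delicate piece of train-track and Markov-partition combinatorics tied to the particular example; everything else — the non-simplicity, and the passage between dynamical forcing and the canonical representative — is essentially formal once the cited structural results are invoked.
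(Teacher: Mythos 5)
Your proposed proof is incomplete and, more importantly, likely relies on a candidate that does not actually satisfy the lemma. The non-simplicity half is fine: a simple orbit supplies an explicit reduction system, so a shear homeomorphism that is pseudo-Anosov rel an orbit cannot have that orbit be simple. The problem is entirely in the twist-type half, which you explicitly defer as ``the hard part.'' For a pseudo-Anosov representative every periodic orbit is unremovable (Asimov--Franks), hence forced; so $x$ forces every orbit of the pA map, and twist type requires that the pA map carries no orbit of rotation number $1/2$ other than $x$ itself. You would need to prove both that $1/2$ is an endpoint of the rotation interval and that the extremal rotation number is realized only by $x$. Neither is argued, and both are doubtful: a pA shear map has a rotation interval with nonempty interior, Doeff's theorem then produces orbits for every interior rational, and the exponential orbit growth of a pA map makes uniqueness at a given rotation number a very strong statement. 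The fact that the authors did not simply declare Example~\ref{example}'s orbit to be of twist type, but instead built an entirely separate example, is strong circumstantial evidence that Example~\ref{example} does not work here.

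The paper takes the opposite structural route. It constructs a period-$4$ orbit $A_1,\dots,A_4$ of rotation number $1/2$ for an explicit shear homeomorphism $h$. Non-simplicity is immediate and combinatorial: a simple orbit of rotation number $1/2$ must have period $2$, not $4$. Twist type is shown by two explicit constructions: (i) $h$ is arranged so that $h^2$ has no fixed points, hence $h$ has no period-$2$ orbit at all; and (ii) a second homeomorphism $f$, isotopic to $h$ rel the orbit, is built whose only periodic orbits are period-$2$ orbits and orbits similar to the $A$-orbit (via an invariant pair of concentric circles bounding an annulus of non-periodic points, with a $180^\circ$ rotation inside). Any orbit forced by the $A$-orbit must persist through the isotopy and hence appear for both $h$ and $f$; combining (i) and (ii) leaves only orbits similar to $A$. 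In particular the Thurston--Nielsen form of $h$ rel the period-$4$ orbit is \emph{reducible}, not pseudo-Anosov --- exactly the opposite of your candidate --- and this reducibility is what makes the non-forcing argument elementary and explicit, with no rotation-interval estimates needed.
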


\begin{proof}
We construct an example of an orbit of length 4 with the rotation number 1/2. It cannot be a simple orbit and yet we prove it does not force the existence of 
any periodic orbit not similar to itself, and is thus of twist type. Such examples may be known, possibly considered  for a different phenomena. We include it here in order to show the independence of our results.

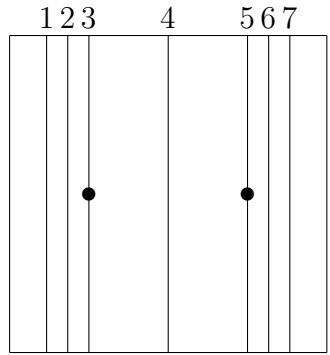
\begin{figure}
\centering

\begin{picture}(150,150)(20,-70)
\put(40,60){\line(1,0){120}}   \put(40,-60){\line(1,0){120}} 
\put(40,-60){\line(0,1){120}}   \put(160,-60){\line(0,1){120}} 
\put(54,-60){\line(0,1){120}}   \put(62,-60){\line(0,1){120}} 
\put(70,-60){\line(0,1){120}}   \put(100,-60){\line(0,1){120}} 
\put(130,-60){\line(0,1){120}}   \put(138,-60){\line(0,1){120}} 
\put(146,-60){\line(0,1){120}}  

 \put(51,63){1}     \put(59,63){2}    \put(67,63){3}   
 \put(97,63){4}    \put(127,63){5}    \put(135,63){6}   
 \put(143,63){7}   

 \put(70,0){\circle*{5}}
 \put(130,0){\circle*{5}}

\end{picture}

\caption{Some vertical lines in $U_2$}\label{step1}
\end{figure}

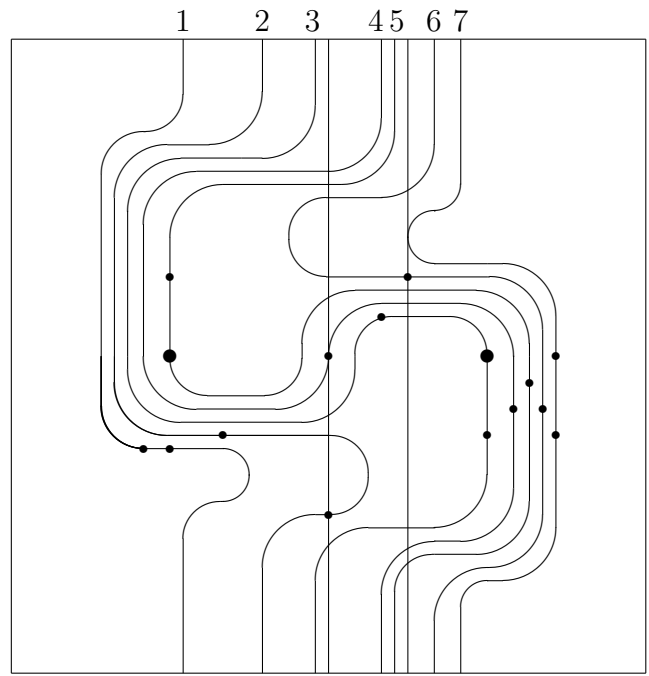
\begin{figure}
\centering

\begin{picture}(300,300)(20,-150)

\put(40,120){\line(1,0){240}}   \put(40,-120){\line(1,0){240}} 
\put(40,-120){\line(0,1){240}}   \put(280,-120){\line(0,1){240}} 
 \put(100,0){\circle*{5}}
 \put(220,0){\circle*{5}}

\put(125,0){\oval(70,40)[b]} \put(195,0){\oval(70,40)[t]}

 \put(125,0){\oval(50,30)[b]}   \put(120,-45){\oval(20,20)[r]}  
\put(195,0){\oval(50,30)[t]}   \put(193,0){\oval(86,50)[t]}  
\put(160,45){\oval(30,30)[l]}  \put(200,45){\oval(20,20)[l]}
\put(160,30){\line(1,0){60}}   \put(160,60){\line(1,0){20}} 
\put(90,-35){\line(1,0){30}}   \put(200,35){\line(1,0){20}} 

 \put(160,0){\oval(140,140)[tl]}   \put(160,0){\oval(120,130)[tl]}  
 \put(160,120){\oval(40,100)[br]}   \put(160,120){\oval(50,110)[br]}  
 \put(180,120){\oval(40,120)[br]}   \put(200,120){\oval(20,130)[br]}  
 \put(127,0){\oval(86,50)[b]}   \put(160,-45){\oval(30,30)[r]} 

   \put(127,0){\oval(86,150)[lt]}   \put(127,120){\oval(56,90)[rb]} 
 \put(100,0){\oval(42,160)[lt]}   \put(100,120){\oval(70,80)[rb]} 
 \put(90,0){\oval(32,170)[lt]}   \put(90,120){\oval(30,70)[rb]} 
 \put(160,0){\oval(162,60)[lb]}   \put(90,0){\oval(32,70)[lb]} 

\put(160,-120){\oval(50,120)[lt]}   \put(90,0){\oval(32,70)[lb]} 
\put(120,-120){\oval(30,130)[lt]}   \put(90,0){\oval(32,70)[lb]} 
\put(160,0){\oval(162,60)[lb]}   \put(90,0){\oval(32,70)[lb]} 

\put(200,0){\oval(40,130)[rb]}   \put(200,-120){\oval(90,110)[lt]} 
\put(200,0){\oval(60,140)[rb]}   \put(200,-120){\oval(40,100)[lt]} 
\put(200,0){\oval(72,150)[rb]}   \put(200,-120){\oval(30,90)[lt]} 
\put(220,-25){\oval(42,110)[r]}   \put(220,-120){\oval(40,80)[lt]} 
\put(220,-25){\oval(52,120)[r]}   \put(220,-120){\oval(20,70)[lt]} 

 \put(100,-35){\circle*{3}}    \put(90,-35){\circle*{3}}
 \put(120,-30){\circle*{3}}    \put(160,-60){\circle*{3}}
 \put(220,-30){\circle*{3}}    \put(180,15){\circle*{3}}
 \put(230,-20){\circle*{3}}    \put(160,0){\circle*{3}}
 \put(100,30){\circle*{3}}    \put(236,-10){\circle*{3}}
 \put(241,-20){\circle*{3}}    \put(190,30){\circle*{3}}
 \put(246,0){\circle*{3}}    \put(246,-30){\circle*{3}}
 
\put(160,-120){\line(0,1){240}}   \put(190,-120){\line(0,1){240}} 

 \put(102,123){1}    \put(132,123){2}
 \put(151,123){3}    \put(175,123){4}
 \put(183,123){5}    \put(197,123){6}
 \put(207,123){7}

\end{picture}

\caption{The foliation in $U_2$ realizing a non-simple orbit of twist type.}\label{twist}
\end{figure}

 We represent the torus as the unit square with the opposite sides identified. The points $A_1,A_2,A_3,A_4$ of the orbit are spaced evenly on the horizontal middle line with the x-coordinate 1/8, 3/8, 5/8, 7/8.
We split the square into 2 equal parts $U_1$ and $U_2$ by the vertical line $x=1/2$. Homeomorphism $h$ translates $U_1$ to the right to $U_2$. Vertical lines go to vertical lines, lines $x=0$ and $x=1/2$ move downward by an irrational number $\alpha<1/40$ and the movement is damped out to the horizontal translation for $t<\alpha$ and $t>1/2-\alpha$, so the other vertical lines are translated horizontally by 1/2. In particular $A_1$ moves to $A_3$ and $A_2$ moves to $A_4$. 

The restriction of $h$ to $U_2$ is defined in two steps. The second step simply translates $U_2$ horizontally by 1/2 to the right (which is the same as the translation by 1/2 to the left). The first step is isotopic to the half-twist along the segment connecting $A_3$ and $A_4$, followed by the Dehn twist with respect to the right side (right boundary of the cylinder). In particular it switches $A_3$ and $A_4$. We shall prove that we can construct such $h$ for which $h^2$ has no fixed points and therefore $h$ has no periodic orbit of length 2 and in particular no simple orbit with the rotation number 1/2.

We describe the first step of $h$ restricted to $U_2$. Figure \ref{step1} shows some vertical lines in $U_2$, the big dots show the points $A_3$ and $A_4$ of the periodic orbit. Figure \ref{twist} shows their images under the first step. In these pictures $U_2$ is represented as a square, to make more space, but in the reality the base has length 1/2 and the height is
equal to 1. Line $x=1/2$ is mapped to itself and moves downward by $\alpha$. The near by vertical lines (for $t<1/2+1/30$) are moved to the vertical lines and to the right where line $x=1/2+1/30$ is moved to the line $x=1/2+1/20$. For 
$t\in (1/2+1/30,1-1/20)$ the line $x=t$ is moved to a curve $L_t$ and for 
$t\in (1-1/20,1)$ the line $x=t$ moves to a vertical line to the right of it and downward, to get the full Dehn twist plus a movement downward by $\alpha$ when we get to the line $x=1$. For $t\in (1/2+1/30,1-1/20)$ curve $L_t$ starts at a point on the top side to the right of $x=t$, it moves to the left, then to the right, then to the left again and ends at the bottom side (exactly below its starting point). In particular each vertical line meets $L_t$ in at most two points. Some lines $L_t$ are shown on Figure \ref{twist}.

 We may arrange it in such a way that there exist $t_0,t_1$ such that $1/2<t_0<t_1<1$ and the line $x=t$:

 is disjoint from $L_t$, and lies on the left side of $L_t$ when $t<t_0$;

 meets $L_t$ at one point for $t=t_0$;

meets $L_t$ at two points when $t_0<t<t_1$;

meets $L_t$ at one point for $t=t_1$;

is disjoint from $L_t$ and lies on the left side of $L_t$ when $t_1<t<1$.

We get a new trivial foliation of the annulus $U_2$. In step 1 we map the vertical foliation onto the new foliation $L_t$. We can further change the first step moving each leave $L_t$ along itself to reach the following goal. 
Let $P_t,Q_t$ denote the intersection points of $x=t$ with $L_t$, $P_t$ lies below $Q_t$ (the points coincide for $t_0$ and $t_1$). For $t=t_0$ the line $x=t$ meets $L_t$ in one point $P_t$. We may assume that the image of $P_t$ in $L_t$ lies in the part below $P_t$. Then for the nearby leave the images of both points $P_t$ and $Q_t$ lie in the lower part of $L_t$ below the point $P_t$ (see the small dots on the first curve in Figure \ref{twist}). The images of $P_t$ and $Q_t$ lie further away from each other when we move to the right (see the small dots on the second curve). The third line passes through $A_3$, its image $L_t$ passes through $A_4$  and the images of $P_t$ and $Q_t$ lie on different sides of $A_4$ along the third curve. Next the upper point $Q_t$ moves backwards along $L_t$ and when we reach the fourth line of Figure \ref{step1} (also shown on Figure \ref{twist}) it coincides with the point $P_t$ on $L_t$. Next the image of $Q_t$ lies inside the arc of $L_t$ between the points $P_t$ and $Q_t$ and when we reach the fifth line on Figure \ref{step1}, which passes through the point $A_4$, then the curve $L_t$ passes through $A_3$ and the image of $Q_t$ on $L_t$ lies above $A_3$ (see Figure \ref{twist}). When we move further to the right the image of the point $Q_t$ moves again forward towards the image of $P_t$ and at the line number 6 on Figure \ref{step1} the image of $Q_t$ again coincides with $P_t$ at the intersection of $x=t$ with $L_t$. Next the images of $P_t$ and $Q_t$ move further down and gets close together and when $t=t_1$ we have one intersection point $P_t$ and its image lie below $P_t$ along $L_t$.
Step 1 has no fixed points. Step 2 translates $U_2$ to $U_1$.

We now consider the homeomorphism $h^2$. We start with $U_1$. Any point on $x=0$ and $x=1/2$ moves down by $2\alpha$. Any point with $x\in [0,1/30]$ moves to a point with a bigger $x$-coordinate. Any point with $x\in [1/30,9/20]$ moves horizontaly by 1/2 then we apply step 1, which has no fixed points, and then the point moves again horizontaly by 1/2 so it comes to a new point. Any point with $x\in [9/20,1/2]$ moves to a point with a bigger $x$-coordinate.

For points in $U_2$ the situation is similar. Any point with $x\in (1/2,16/30]$ moves to a point with a bigger $x$-coordinate. Any point with $x\in [16/30,19/20]$ moves under the first step to a new point with the $x$-coordinate in $[11/20,29/30]$ and then moves horizontaly twice by 1/2. Finally any point with $x\in [19/20,1]$ moves to a point with a bigger $x$-coordinate. Homeomorphism $h^2$ has no fixed points and $h$ has no periodic points of order 2.

We now show that there exists a homeomorphism $f$ isotopic to $h$ in the complement of the orbit $A_1,A_2,A_3,A_4$, which has only periodic orbits similar to this orbit and periodic orbits of order 2. We consider parts $U_1$ and $U_2$ as before. The restriction of $f$ to $U_1$ translates it horizontaly by 1/2. In $U_2$ we choose two circles with center $(3/4,1/2)$ and radius 1/7 and 1/6 respectively.  We rotate the interior of the smaller circle by 180 degrees. The rotation is damped out to the identity at the outer circle and the intermediate circles are moving out towards the outer circle. The exterior of the outer circle with $x<19/20$ is pointwise fixed. The lines with $x>19/20$ move to the right and down  to get the full Dehn twist when we get to the line $x=1$. The second step of $f$ restricted to $U_2$ translates it horizontally by 1/2. Now each point inside the smaller circle, different from its center (which has period 2), belongs to an orbit similar to  $A_1,A_2,A_3,A_4$. Points between the circles  and points with $x\in (19/20,1)$ are not periodic and other points in $U_2$ have period 2 and the same is true for the corresponding points in $U_1$. 

Therefore the orbit $A_1,A_2,A_3,A_4$ does not force any periodic orbit not similar to itself. 
\end{proof}

\section{Simple orbit pairs}\
\label{pairs}
Let $x$ and $y$ be two coexisting simple periodic orbits, for a
homeomorphism $f$ of $\T^2$ 
($f$ must be of shear type),
with rotation numbers $\frac{q_1}{p_1}$ and $\frac{q_2}{p_2}$
respectively. Assume $p_1>p_2$, i.e., $y$ has lesser period than $x$.
\begin{Def} \label{simple pair}  We call the pair of orbits a {\emph simple pair} if 
\begin{itemize}

\item
We can find an embedded graph $G$ in their complement homeomorphic to $G_{p_1}$ as on Figure \ref{A}. 
\begin{figure}[h]
\includegraphics[width=5cm]{./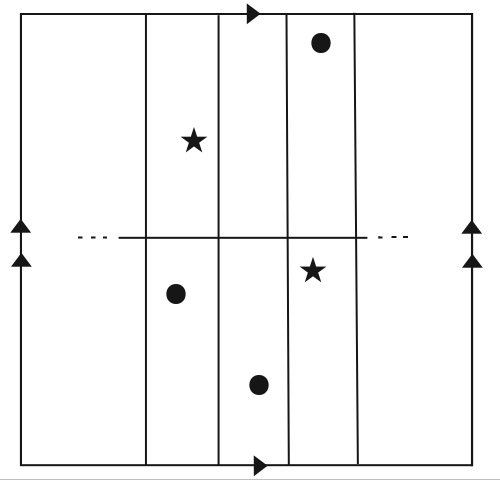}
\caption{}\label{A}
\end{figure}\\
Each component in the complement of the graph is a topological
rectangle which contains exactly one point of orbit $x$ and at
most one point of orbit $y$.

\item The homeomorphism $f$ acts on this graph
in the following way: each vertical loop except one moves to
another vertical loop, there is one vertical loop denoted $l$ such
that $f(l)$ is a vertical loop $m$ plus a small loop around one
point of the (shorter) $y$ orbit, in a rectangle adjacent to line
$m$ on the right (as on Figure \ref{B}) or on the left, and the
horizontal line is mapped to itself plus a twist in the negative
direction around $f(l)$, as on Figure \ref{B}.
\begin{figure}[h]
\includegraphics[width=5cm]{./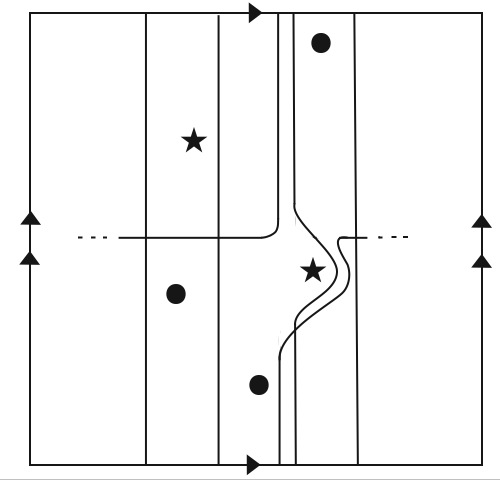}
\caption{}\label{B}
\end{figure}
\end{itemize}\end{Def}
The graph which appears in Definition \ref{simple pair} divides
the torus $\T^2$ into $p_1$ rectangles. The homeomorphism $f$
moves each vertical loop the same distance, say $k$ rectangles, to the
right except for the small additional loop for line $l$. Let $R_0$
be the rectangle adjacent to $m$ in which the small loop in the
image of the graph occurs.
$R_0$ must contain exactly one point of each orbit. We denote these points
$x_0$ and $y_0$ respectively. Under $p_1$ iterations of $f$
the point $x_0$ runs $q_1$ times around the whole torus, that is $q_1p_1$
rectangles to the right. So,
$p_1k=q_1p_1$ and $k=q_1$. 

The point $y_0$  is mapped to
itself after $p_2$ iterations.Under each iteration the image of
$y_0$ is mapped $k\ (=q_1)$ rectangles to the right, except the last iteration under which it is moved an additional rectangle to the right or left. Altogether it has moved  $p_2q_1\pm 1$ rectangles. 
At the same time, it is mapped around the torus $q_2$
times, hence $q_2p_1$ rectangles. This means $p_2q_1\pm 1=q_2p_1$. Thus for a
simple pair of periodic orbits the rotation numbers $\frac{q_1}{p_1}$ and
$\frac{q_2}{p_2}$ are Farey neighbors and the additional loop is
on the right (as on Figure \ref{B}) if and only if
$\frac{q_2}{p_2}>\frac{q_1}{p_1}$. 
Denote by $\wh{r}\vee\wh{s}$ the similarity class of a simple pair corresponding to a pair of Farey neighbors $r\vee s$.

Consider again the points $x_0$ and $y_0$ in the rectangle $R_0$.
Continue the notation to all the points of $x$ and $y$ by $x_i=f^i(x_0)$ and
$y_i=f^i(y_0)$. We draw a small loop around each of the points
of $y$. The union of these loops will be
the peripheral subgraph $P$ for the Bestvina-Handel algorithm, since we may assume the union of these loops to be $f$-invariant. Now we
consider separately two cases. Case 1 will be the case in which $m$
is the left boundary curve of $R_0$, while in case 2 it is the
right boundary (in other words in the first case
$\frac{q_1}{p_1}<\frac{q_2}{p_2}$ and in the second case
$\frac{q_1}{p_1}>\frac{q_2}{p_2}$). Choose some point on the loop
around $y_0$ and connect it, by a curve $l_0$, to a
point on the section of the horizontal line in $R_0$, in case 1
from below the segment and in case 2 from above.
\begin{figure}[h]
\includegraphics[width=5.5cm]{./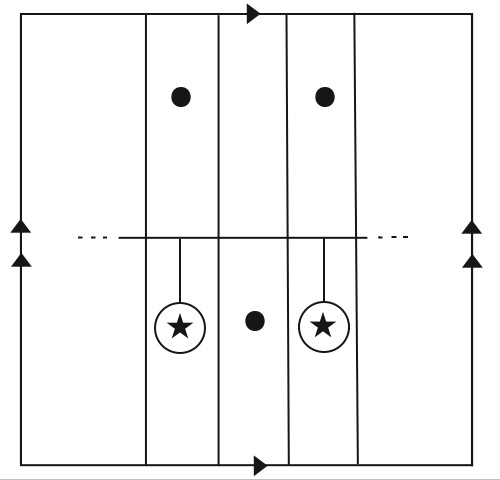}
\caption{}\label{C}
\end{figure}

Then 
$f(l_0)$ is a curve connecting the loop around $y_1$ and the
corresponding horizontal segment. We denote it by $l_1$, and do the same
for each $y_i$. 
After adding the above edges to the graph case 1 is topologically as in Figure \ref{C}. 

The inclusion
$G\hookrightarrow S_0$ is a homotopy
equivalence (where $S_0$ is the punctured torus). We know
the action of $f$ on all edges of $G$ except for the curve $l_{q_{1}-1}$ connecting $y_{q_1-1}$ and the horizontal
segment in the corresponding rectangle. It's image is a curve
connecting the horizontal segment in the rectangle
adjacent to $m$ which is not $R_0$ to the loop around $y_0$. This image might wind around a disk containing
$y_0$ and $x_{q_1-1}$ as in Figure \ref{winding}
\begin{figure}[h]
\includegraphics[width=3cm]{./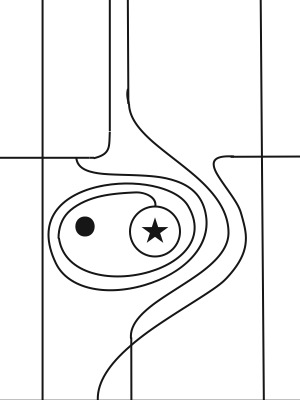}
\caption{}\label{winding}
\end{figure}\\
The graph and its image for case 2 are exactly the same except
that the loops are connected to the horizontal segments from
above. We shall prove in Proposition \ref{no
twists} that we may assume that the image of the segment
$l_{q_1-1}$ has no winding. Hence we draw from now on the graph images
without winding, and we may assume the graphs given in Figure \ref{pair} also
have an invariant neighborhood by a further isotopy of $f$.
\begin{figure}[h]
\includegraphics[width=6cm]{./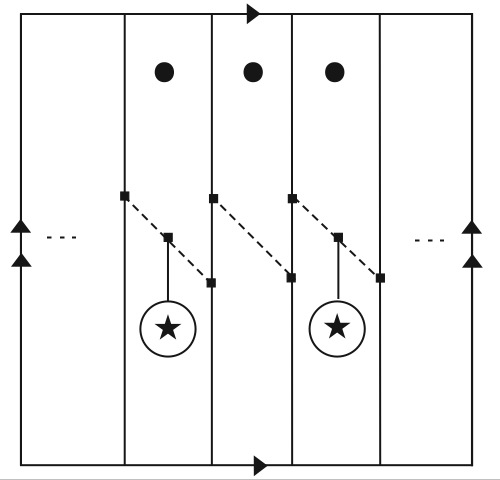}
\includegraphics[width=6cm]{./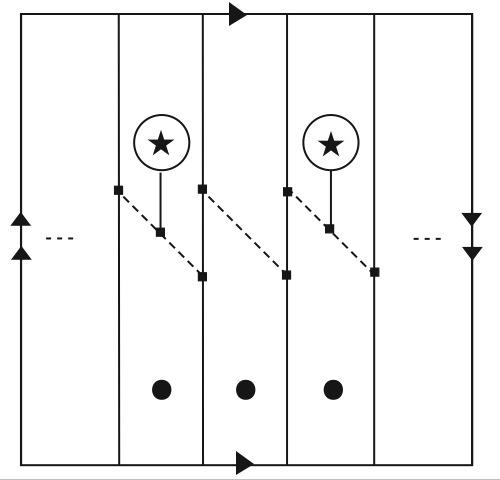}
\caption{ The standard graph for a simple pair, case 1 on the left and case 2 on the right.}\label{pair}
\end{figure}\\
The action of $f$ (up to isotopy) on this graph is given by one of
the actions on Figure \ref{pair_im}, drawn in some regular neighborhood of
the graph, where each vertical loop moves $q_1$ loops
to the right. 
\begin{figure}[h]
\includegraphics[width=6cm]{./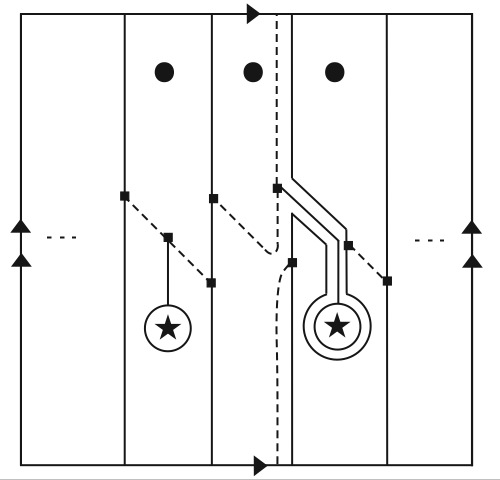}
\includegraphics[width=6cm]{./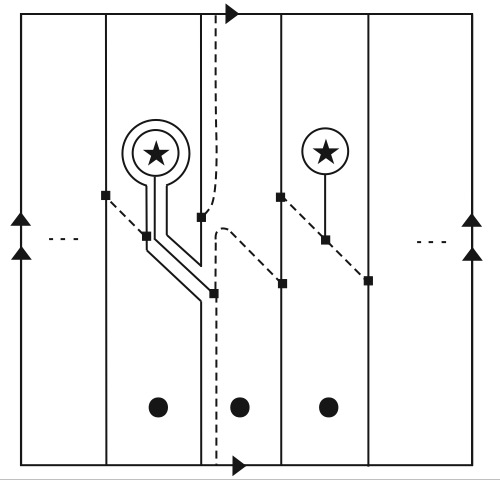}
\caption{The action on a standard graph for a simple pair, for both cases respectively.}
\label{pair_im}
\end{figure}
\begin{prop} Let $\{x,y\}$ be a simple pair with the graph as on Figure \ref{C} and with windings as on
Figure \ref{winding}. Then there may be chosen a
different invariant graph, which also makes $\{x,y\}$ a simple
pair, whose image is without winding. \label{no twists} \end{prop}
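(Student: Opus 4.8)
The plan is to prove this by a direct isotopy argument that "absorbs" the winding into the graph itself. The winding occurs because the image $f(l_{q_1-1})$ may wrap around the disk $D$ containing $y_0$ and $x_{q_1-1}$ some number of times, say $n$ times, before reaching the loop around $y_0$. The key observation is that this winding number $n$ is a free parameter in the construction: it records a choice of arc $l_0$ (equivalently, how the chosen point on the loop around $y_0$ is joined to the horizontal segment in $R_0$), and changing that choice by a Dehn twist around $\partial D$ changes $n$. The strategy is therefore to replace the original arc $l_0$ by $l_0' = \tau^{-n}(l_0)$, where $\tau$ is the Dehn twist about a loop going once around $D$, and then propagate this change forward by the dynamics, setting $l_i' = f^i(l_0')$ for $1 \le i \le q_1 - 1$, exactly as in the original construction.

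First I would set up the bookkeeping: identify precisely the loop $\gamma = \partial D$ about which the winding is measured, verify that $\gamma$ is (isotopic to) one of the small peripheral loops around $y_0$ composed with the vertical loop $m$ (or its neighbor) — so that it is carried to an analogous curve by $f$ — and note that the new graph $G'$ obtained by replacing $\{l_i\}$ with $\{l_i'\}$ is still embedded, still a deformation retract of $S_0$ (since $l_0'$ is isotopic rel endpoints to $l_0$ on the torus and only differs in how it threads the punctures, it is still a properly embedded arc meeting the rest of $G$ only at its endpoints), and still satisfies the combinatorial conditions making $\{x,y\}$ a simple pair. Second, I would compute the action of $f$ on $G'$: for $0 \le i \le q_1 - 2$ we have $f(l_i') = l_{i+1}'$ by construction, and the only edge whose image must be computed by hand is $l_{q_1-1}'$. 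Here $f(l_{q_1-1}') = f(f^{q_1-1}(l_0')) = f^{q_1}(\tau^{-n}(l_0))$, and since $f^{q_1}$ conjugates $\tau$ to the twist $\tau'$ about the image curve $f^{q_1}(\gamma)$, which is precisely the loop about which the original winding of $f(l_{q_1-1})$ was measured, the $n$ extra windings of the original image are exactly cancelled by the $\tau^{-n}$ we inserted. Thus $f(l_{q_1-1}')$ has no winding. Finally, I would invoke the standard fact (used already in the paper) that a graph map determines a homeomorphism of the punctured surface up to isotopy, and that since $G'$ has an $f$-invariant regular neighborhood after a further isotopy of $f$ rel the punctures, we obtain the desired homeomorphism exhibiting $\{x,y\}$ as a simple pair with a winding-free image graph, as in Figure \ref{pair}.

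The main obstacle I expect is the precise identification of the curve $\gamma$ and checking that $f^{q_1}$ really does conjugate the twist $\tau$ to the twist measuring the winding of $f(l_{q_1-1})$ — i.e. that the disk $D$ and its forward image under the first $q_1$ iterates track the orbit points $x_{q_1-1}$ and $y_0$ coherently, so that the single winding parameter $n$ is genuinely the only obstruction and it transforms the way the argument needs. This requires looking carefully at how $f$ permutes the rectangles $R_0, f(R_0), \dots$ and at the fact that $y_0$ returns to its rectangle only after $p_2$ steps while $x_{q_1-1}$ is in the rectangle adjacent to $m$; one must check that no other isotopy invariant of the image arc (beyond the winding number $n$ around this one disk) can appear, which follows because the complement of the graph consists of rectangles each containing at most one point of each orbit, so the only homotopically nontrivial way an arc can run is by encircling such points, and the construction confines all of that freedom to the single arc $l_{q_1-1}$.
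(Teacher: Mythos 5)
The paper's proof is fundamentally different from what you propose: it starts with a graph with \emph{no} twist, constructs a modified embedded graph $H$ in a neighborhood of $G$ (replacing the spine itself — vertical loops, horizontal loop, and connecting segments together — not just the arc $l_0$), shows that the induced action on $H$ acquires exactly one positive twist, and then invokes reversibility and induction to pass between any two twist counts. Your proposal instead tries to absorb the winding by modifying only the single arc $l_0$ by a Dehn twist $\tau^{-n}$ about $\partial D$ and propagating forward by the dynamics.

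The step that fails is your key cancellation claim. After replacing $l_0$ by $l_0'=\tau^{-n}(l_0)$ and setting $l_i'=f^i(l_0')$, you get
$f(l_{q_1-1}') = f^{q_1}\tau^{-n}(l_0) = \bigl(f^{q_1}\tau^{-n}f^{-q_1}\bigr)\bigl(f^{q_1}(l_0)\bigr)$,
and $f^{q_1}\tau^{-n}f^{-q_1}$ is the $-n$ power of the twist about $f^{q_1}(\partial D)$, not about $\partial D$. But the winding of $f(l_{q_1-1})$ is measured about $\partial D$ itself (the paper specifies $D$ contains $y_0$ and $x_{q_1-1}$), and $f^{q_1}(\partial D)$ encircles the \emph{different} punctures $y_{q_1}$ and $x_{2q_1-1}$. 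These coincide with $y_0$ and $x_{q_1-1}$ only when $q_1\equiv 0\ (\mathrm{mod}\ p_2)$ and $q_1\equiv 0\ (\mathrm{mod}\ p_1)$, i.e. never. So the conjugated twist does not cancel the winding; it simply introduces winding around an unrelated disk. If you try to fix this by twisting $l_0$ about the correct curve $f^{-q_1}(\partial D)$, you find that this curve encircles $f^{-q_1}(y_0)$ and $x_{p_1-1}$, neither of which lies in $R_0$, so it is disjoint (up to isotopy) from $l_0$ and the twist has no effect on $l_0$ at all. In other words, modifying the connecting arcs alone, holding the vertical and horizontal loops fixed, cannot remove the winding — this is exactly why the paper is forced to change the whole spine. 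You flag "the precise identification of the curve $\gamma$" as the main obstacle, but the obstacle is not a bookkeeping step you deferred: the needed identity $f^{q_1}(\gamma)\simeq\gamma$ is false, so the proposed construction does not prove the proposition.
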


\begin{proof}
To simplify the picture we prove the proposition for
rotation numbers $\frac{1}{3}$ and $\frac{1}{2}$ . The
general proof proceeds in the same way. We start by looking at a
simple pair $\{x,y\}$ of rotation numbers $\frac{1}{3}$ and
$\frac{1}{2}$ for a homeomorphism $f$ with the corresponding
invariant graph $G$ given so that the action on it is without any
twists, as on the left side of Figures \ref{pair} and \ref{pair_im}.  We now
choose a different system of curves (a different graph), in a
small neighborhood of $G$ as in Figure \ref{E}, which will serve as a new graph for the pair.
\begin{figure}[h]
\includegraphics[width=6cm]{./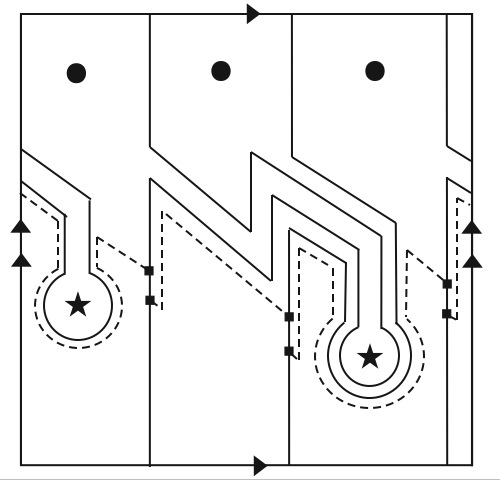}
\caption{}\label{E}
\end{figure}\\
Solid lines are the new vertical loops and dashed lines are the
    new diagonal segments like in figure \ref{pair}. The horizontal loop
    consists of the dashed lines and the long pieces of the solid
    lines. To move from left to right along the horizontal line, move along a dashed line and turn
    to the left when meeting a solid line. Continue up
    along a vertical loop and then along the next dashed line.
We add to this graph the peripheral subgraph and the connecting
   segments and get the graph $H$ as on Figure \ref{new1}. It is clear that
   topologically the graph $H$ has the same form as the graph on
Figure \ref{pair} and that it has an invariant neighborhood.
\begin{figure}[h]
\includegraphics[width=6cm]{./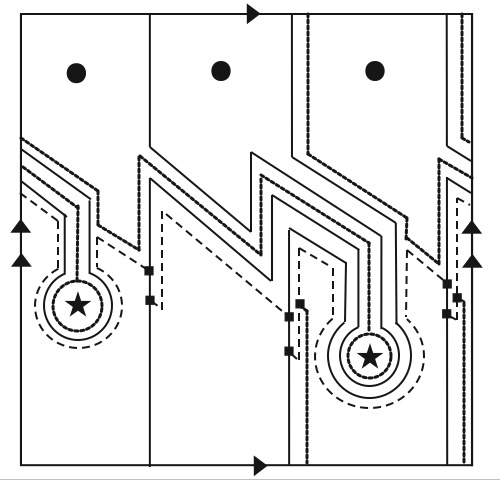}
\caption{}\label{new1}
\end{figure}\\

The reader can check (using the precise knowledge
of the image of each edge of the original graph) that the action
of $f$ on the graph $H$ has the properties required from a simple
pair. Each vertical loop is mapped onto another vertical loop
except for one loop $l$ for which $f(l)$ is equal to a loop $m$
plus a loop around the next periodic point $y_p$ of the shorter
orbit. The horizontal loop is mapped onto itself plus a negative
Dehn twist along $f(l)$. Consider the image $s$ of the segment
which connects the horizontal loop to the periodic point
$y_{p-1}$. When the action has no twist then $s$ moves along the
horizontal loop in its positive direction until it meets the
original segment connecting to $y_p$ and then it follows along the
segment. However in our case $s$ goes first backwards along the horizontal
loop than moves in the counterclockwise direction along the
boundary of the "rectangle" adjacent to the vertical loop $m$ and
finally follows the horizontal loop and the segment to $y_p$. This 
means that the action $f$ on the graph $H$ has one positive twist.

  We proved that a simple pair for a shear homeomorphism with a
  given graph and a given action without twists can be given
  another graph which also describes it as a simple pair and the
  action on the new graph has one positive twist. This process is
  reversible. Therefore, by induction, we can add or remove any
  number of twists using a suitable graph. This implies Proposition
  \ref{no twists}.
\end{proof}

Hence for a simple pair the action on a spine is given by Figures \ref{pair} and \ref{pair_im}. We can now apply the Bestvina-Handel
algorithm (see \cite{BH}), endowing a neighborhood of $G$ with a fibered structure in the natural way. The algorithm specifies a finite number of steps which we apply to the graph $G$, altering $G$ together with the induced action on it, but without changing the isotopy class of $f$ on $\T^2\setminus({x}\cup{y})$. When the algorithm terminates, it gives a
new homeomorphism $\tf$ 
which is the Thurston Nielsen canonical form of $f$. 

For simple pairs, the action in each of the two cases above is
easily seen to be tight, as no edge backtracks and for every vertex there are two edges whose images emanate in different directions. The action has no invariant non-trivial forest
or nontrivial invariant subgraph and the graphs have no valence 1
or 2 vertices. 
This is the definition in \cite{BH} for an
irreducible map on a graph.

\begin{Def}
Assuming $g$, the induced map on the graph
itself, does not collapse any edges, there is an induced map $Dg$,
the \emph{derivative} of $g$, defined on 
\begin{eqnarray*}
\mathcal{L}:=\coprod\{(v,e) |v \text{ is a vertex of } G, e \text{ is an oriented edge emanating from } v\}
\end{eqnarray*}
by $Dg(v,a)=(g(v),b)$
where $b$ is the first edge in the edge path $g(a)$ which emanates
from $g(v)$.
\end{Def}

\begin{Def} We say two elements $(v,a)$ and $(v,b)$ in $\mathcal{L}$
corresponding to the same vertex $v$ are \emph{equivalent} if they
are mapped to the same element under $D(g^n)$ for some natural n.
The equivalence classes are called \emph{gates}\end{Def}

The gates in each of the cases above are given by Figure \ref{gates}, indicated there by small arcs.
\begin{figure}[h]
\includegraphics[width=6cm]{./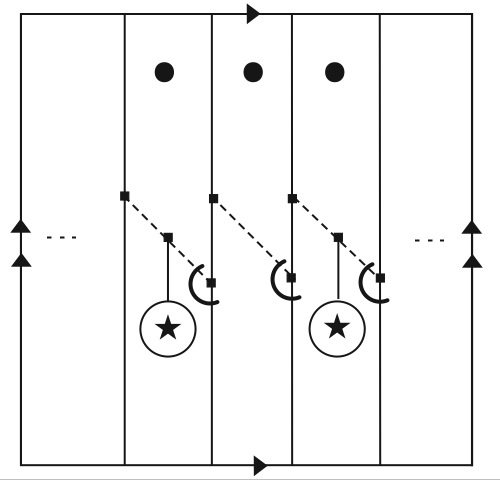}
\includegraphics[width=6cm]{./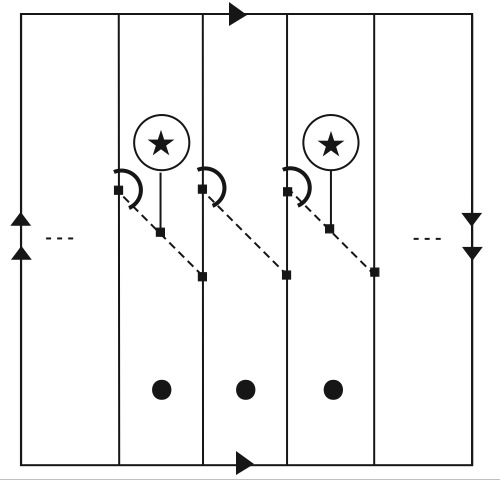}
\caption{}\label{gates}
\end{figure}\
There is no edge which $g$ sends to an edge
path which passes through one of the gates - enters the junction
through one arm of the gate and exits through the other. Such an irreducible map is efficient. i.e., this is an end point of the algorithm. Now, since
there are edges mapped to an edge path longer than one edge, we arrive at our next theorem.

\begin{thm} 
A homeomorphism $f$ of the two torus for which a simple pair of periodic orbits exists is isotopic to a pseudo-Anosov homeomorphism relative to this pair of orbits.
\end{thm}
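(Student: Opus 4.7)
The plan is to appeal directly to the Bestvina--Handel classification of efficient irreducible graph maps applied to the action on $G$ pictured in Figures \ref{pair} and \ref{pair_im}. The preceding discussion has already verified all of the termination conditions of the algorithm: the induced map is tight, has no invariant nontrivial subforest, has no invariant proper subgraph (so it is irreducible), has no valence-1 or valence-2 vertices, and no edge-image crosses a gate (so it is efficient). By the main theorem of \cite{BH}, this combinatorial data already represents the Thurston--Nielsen canonical form of $f$ on $\T^2 \setminus (x \cup y)$, so there is no further algorithmic work to perform; it only remains to identify which of the three types we are in.

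Next I would rule out the two non-pseudo-Anosov options. The reducible case is excluded by the irreducibility already verified above, so I am left only with the dichotomy between the finite-order case (transition matrix is a permutation) and the pseudo-Anosov case (Perron--Frobenius eigenvalue $\lambda > 1$). To eliminate the former, I would read off the relevant row of the transition matrix directly from Figure \ref{pair_im}: the image of the horizontal loop contains the entire horizontal loop plus an extra traversal of one vertical loop coming from the negative Dehn twist around $f(l)$. Hence at least one entry of the transition matrix is $\geq 2$, and in particular the matrix is not a permutation matrix. The map is therefore genuinely expanding, so $\lambda > 1$.

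Finally, the Bestvina--Handel construction associates to such an irreducible, efficient, expanding graph map an invariant train-track structure whose smoothing produces a pair of transverse invariant measured foliations $(\mathcal{F}^u, \mu^u)$ and $(\mathcal{F}^s, \mu^s)$ stretched and contracted by $\lambda$ respectively, with the punctures $x \cup y$ becoming distinguished points (possibly 1-prong singularities, as permitted in the punctured setting discussed in Section \ref{simple section}). This is precisely the definition of a pseudo-Anosov representative rel $x \cup y$. The main obstacle throughout is not in the final appeal but rather in the delicate setup carried out in the preceding pages, namely the reduction to the standard graph of Figure \ref{pair} via Proposition \ref{no twists} and the verification that every Bestvina--Handel termination condition holds; once that is in place, the conclusion follows by citation of \cite{BH} together with the trivial inspection of the horizontal loop's image.
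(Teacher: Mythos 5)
Your proposal follows essentially the same route as the paper: verify that the induced graph map of Figures \ref{pair} and \ref{pair_im} meets the Bestvina--Handel termination criteria (tight, no invariant subforest or proper subgraph, no low-valence vertices, no edge-image crossing a gate), then observe that some edge maps to a path longer than one edge, so the efficient irreducible representative must be pseudo-Anosov rather than finite order. One small imprecision: the twist around $f(l)$ guarantees a \emph{row sum} of the transition matrix is at least $2$ (some edge maps to an edge path of length $\geq 2$), not necessarily that a single entry is $\geq 2$; either way the matrix is not a permutation matrix, so the Perron--Frobenius eigenvalue exceeds $1$ and the conclusion stands.
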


Let $f$ be a shear type homeomorphism of the torus, and fix a lift $\tf$ of $f$.
Define the \emph{lift rotation number} of a point $x\in\mathrm{T}^2$ to be
\begin{eqnarray*}
\rho(x,\tf)=lim_{n\rightarrow\infty}\frac{(\tf^n(\hat{x})-\hat{x})_1}{n},
\end{eqnarray*}
for any lift $\hat{x}$ of $x$, when the limit exists, where the subscript 1 denotes the projection to the horizontal axis. Define the \emph{rotation set} $\rho(\tf)$ of $\tf$ to be the set of accumulation points of 
\begin{eqnarray*}
\bigg\{\frac{(\tf^n(\hat{x})-\hat{x})_1}{n}|\hat{x}\in\mathbb{R}^2 and\ n\in\mathbb{N}\bigg\}
\end{eqnarray*}

Then, the above theorem follows  from the following much more general theorem by 
Doeff, see \cite{Doeff} and \cite{DM}. 

\begin{thm}\begin{bf}{(Doeff)}\end{bf} Let $h$ be a shear type homeomorphism of $\mathrm{T}^2$, and fix a lift $\tilde{h}$ of $h$. If $h$ has two periodic points $x$ and $y$ with $\rho(x,\tilde{h})\neq\rho(y,\tilde{h})$ then $h$ is pseudo-Anosov relative to $x$ and $y$.
Furthermore, the closure of the rotation set is a
compact interval, and any rational point $r$ in the interior of
this interval corresponds to a periodic point $x\in\mathrm{T}^2$ with 
$\rho(x,\tilde{h})=r$.
\label{Doeff's theorem} \end{thm}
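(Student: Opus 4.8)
The plan is to deduce Doeff's theorem from the Thurston--Nielsen classification together with the standard theory of rotation sets for annulus and torus homeomorphisms.

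I begin with the pseudo-Anosov assertion. Since $\rho(\cdot,\tilde h)$ is constant along an orbit, the hypothesis $\rho(x,\tilde h)\ne\rho(y,\tilde h)$ forces $\mathrm{orb}(x)$ and $\mathrm{orb}(y)$ to be disjoint; set $P=\mathrm{orb}(x)\cup\mathrm{orb}(y)$, so $\T^2\setminus P$ has negative Euler characteristic. Replace $h$ by $g:=h^{N}$ with $N=\mathrm{lcm}(p_1,p_2)$, and $\tilde h$ by $\tilde g:=\tilde h^{\,N}$; then every point of $P$ is fixed by $g$ and $\rho(x,\tilde g)=N\rho(x,\tilde h)\ne N\rho(y,\tilde h)=\rho(y,\tilde g)$. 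The core of the argument is the claim that \emph{$g$ is pseudo-Anosov relative to the two points $\{x,y\}$ alone}. Granting this: adjoining the remaining points of $P$ --- which are fixed points of $g$ --- as marked points preserves the pseudo-Anosov property (the invariant measured foliations restrict, the new marked points becoming regular points or singularities, both permitted at distinguished points), so $g$ is pseudo-Anosov relative to $P$. And if $h$ relative to $P$ were reducible or of finite order, then so would be $g=h^{N}$ relative to $P$ (being reducible, and being of finite order, both persist under taking powers), contradicting the previous sentence; hence $h$ is pseudo-Anosov relative to $P$, i.e.\ relative to $x$ and $y$.

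To prove the claim, apply the Thurston--Nielsen trichotomy to $g$ on $\T^2\setminus\{x,y\}$ (Euler characteristic $-2$). It is not of finite order, because a finite power of a finite-order homeomorphism is isotopic to the identity of $\T^2$, whereas $g=h^{N}$ is isotopic to the $N$-th power of a Dehn twist, of infinite order in $\mathrm{MCG}(\T^2)$. Suppose it were reducible, with (nonempty) canonical reducing system $\Gamma$. If some component of $\Gamma$ is inessential in $\T^2$ it bounds a disc $D\subset\T^2$, which, being essential and non-peripheral in $\T^2\setminus\{x,y\}$, must enclose both punctures. Isotoping $g$ rel $\{x,y\}$ so that $g(D)=D$, pick a lift $\tilde D\subset\mathbb{R}^2$ (a bounded disc) permuted by $\tilde g$; as $\hat x,\hat y$ are the unique lifts of $x,y$ in $\tilde D$ one gets inductively $\tilde g^{\,n}(\tilde D)=\tilde D+w_n$, $\tilde g^{\,n}(\hat x)=\hat x+w_n$, $\tilde g^{\,n}(\hat y)=\hat y+w_n$ for a common $w_n\in\mathbb{Z}^2$, and since the homology matrix of $g$ fixes the first coordinate one finds $(w_n)_1=n(w_1)_1$; hence $\rho(x,\tilde g)=(w_1)_1=\rho(y,\tilde g)$, a contradiction. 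If instead every component of $\Gamma$ is essential in $\T^2$, then as $g$ permutes them and acts on $H_1(\T^2)=\mathbb{Z}^2$ through the parabolic matrix $\bigl(\begin{smallmatrix}1&0\\N&1\end{smallmatrix}\bigr)$ --- whose only eigenvalue is $1$, with eigenline $\langle(0,1)\rangle$, and for which $-1$ is not an eigenvalue --- the unoriented class of each such component, being invariant under a power of this matrix, is fixed by it, hence vertical. These $t\ge1$ parallel vertical curves cut $\T^2$ into annuli; their preimages in $\mathbb{R}^2$ form a $\mathbb{Z}$-indexed family of properly embedded lines of bounded horizontal variation which $\tilde g$ permutes by an order-preserving bijection of the index set, necessarily a shift $i\mapsto i+s$. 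Hence $(\tilde g^{\,n}(\hat z)-\hat z)_1=\lfloor ns/t\rfloor+O(1)$ uniformly in $\hat z$, so every point has rotation number $s/t$, again contradicting $\rho(x,\tilde g)\ne\rho(y,\tilde g)$. This rules out reducibility and proves the claim.

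For the remaining assertions I would pass to the infinite cylinder $C=\mathbb{R}^2/\mathbb{Z}(0,1)$, to which $\tilde h$ descends (because $(0,1)$ is fixed by the homology matrix) as a homeomorphism $\hat h$ of the open annulus --- which, being orientation- and end-preserving, is isotopic to the identity --- with the horizontal rotation number of a point of $\T^2$ under $\tilde h$ equal to its $x$-rotation number under $\hat h$. That $\overline{\rho(\tilde h)}$ is a compact interval $[\alpha,\beta]$ is the one-dimensional case of the Misiurewicz--Ziemian theorem (the two-dimensional rotation set is compact convex, and one projects to the horizontal axis). That every rational $r\in(\alpha,\beta)$ is realized by a periodic point with $\rho(x,\tilde h)=r$ follows from Franks' realization theorem: when the two-dimensional rotation set has nonempty interior one selects therein a rational vector with horizontal coordinate exactly $r$ and applies the theorem on $\T^2$; in the degenerate case one applies the annulus version to $\hat h$. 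The part I expect to be most delicate is precisely this realization of rational rotation numbers by genuine periodic orbits, together with --- on the pseudo-Anosov side --- the treatment of disc-bounding reducing curves, which is what forces passing to the power $h^{N}$ and working with only two punctures.
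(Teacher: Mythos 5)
The paper does not prove this theorem: it is stated as a result of Doeff and simply cited from \cite{Doeff} and \cite{DM}, so there is no internal proof to compare against; your sketch has to stand on its own.

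The first half, deducing the pseudo-Anosov alternative from the Thurston--Nielsen trichotomy, is sound in outline. Passing to the power $g=h^{N}$ and to the two marked points $\{x,y\}$ is a sensible simplification; finite order is correctly excluded via the infinite order of the parabolic mapping class in $\mathrm{MCG}(\mathrm{T}^2)$; and in the reducible case your lifting argument (a disc-bounding component forces $\rho(x,\tilde g)=\rho(y,\tilde g)$ via the translated lifts of the disc, while an all-vertical reducing system forces every point to have rotation number $s/t$) captures the right mechanism. A few steps you pass over deserve explicit care: before the lifting calculation you must isotope $g$ rel $\{x,y\}$ so that it honestly preserves a representative of the reducing system, noting that this does not change $\rho(x,\tilde g)$ or $\rho(y,\tilde g)$; the passage from ``$g$ pseudo-Anosov rel $\{x,y\}$'' to ``rel $P$'' uses the (true but not obvious) fact that adjoining extra periodic points as distinguished points preserves the pseudo-Anosov property; and the passage back from $g=h^N$ to $h$ uses that finite order and reducibility persist under taking powers.

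The second half has a genuine gap. Misiurewicz--Ziemian and Franks' realization theorem concern torus homeomorphisms \emph{isotopic to the identity}; here $h$ is isotopic to a Dehn twist, so there is no bounded two-dimensional rotation set and neither theorem applies as cited. Passing to the infinite cylinder $\mathbb{R}\times S^1$ is indeed the right move, but that cylinder is non-compact, and the ``annulus version of Franks'' you gesture at is not the off-the-shelf compact-annulus result. Establishing that the rotation set has compact closure, is an interval, and that every interior rational is realized by a periodic orbit in this shear setting is precisely the technical content of Doeff's work (via Nielsen fixed-point index and chain-recurrence arguments on the open cylinder); it cannot be imported wholesale from the standard torus or compact-annulus theory. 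As you yourself anticipated, this is where the sketch is incomplete, and as written it essentially presupposes the hard part of the theorem it is meant to prove.
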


In particular, Doeff proves existence of two periodic orbits of different rotation numbers
implies existence of an orbit for any rational rotation number between
these two. But he does not give any characterization of these
orbits. Example \ref{example} shows two different orbits, both
with rotation number equal to 1/2, one of which is
pseudo-Anosov, and the other reducible. Thus the rotation number does not give much information about the orbit and in this
sense this theorem does not give a satisfactory dynamical
understanding of what is happening in regions of coexistence of
orbits. In contrast with Doeff's general theorem, we get results for a very specific family of periodic orbits,
but for this family we are able to give exactly the orbits forced
by others, as we show in section \ref{order}. 

In our case
the canonical form $\tf$ of $f$ we get by applying the Bestvina-Handel algorithm is a pseudo Anosov homeomorphism.
When this is the case, the algorithm
gives a canonical way of endowing a regular neighborhood $S_0$ of $G$ with a
rectangle decomposition $\{R_1,...,R_N\}$. The decomposition is a Markov
partition for the homeomorphism $PSL$.

A Markov partition for a dynamical system offers a symbolic representation for
the system in the following way. Let
$\Sigma_N$ be the subset of the full $N$-shift (the set of
bi-infinite series on $N$ symbols), where $N$ is the number of rectangles in the decomposition. Let $\Sigma$ be a subset of $\Sigma_N$ defined by \bc
$\Sigma=\{s=(...,s_n,s_{n+1},....): R_{s_n}\cap \tf^{-1}
R_{s_{n+1}}\neq \emptyset\}$ \ec

On $\Sigma$ we naturally define a dynamical system with the
operator of the right shift denoted by $\sigma$, and $(\Sigma,
\sigma)$ is called the subshift corresponding to the dynamical
system. $\Sigma$ can be completely described by stating which
transitions $k\rightarrow m$ for $k,m \in \{1,....,N\}$ are
\emph{allowed} (i.e., for which $k,m$, $\tf^{-1}R_m \cap R_k \neq
\emptyset$). See \cite{Adler} for the definitions and
for a proof that in this case we can define a map
$\pi:\Sigma\rightarrow S_0$ by \bc
$\pi:s\mapsto\bigcap_{n=0}^{\infty}\overline{\tf^nR_{s_{-n}}\cap....\cap\tf^{-n}R_{s_n}
}$ \ec
which satisfies the following properties:
\begin{itemize}

\item $\pi\sigma=\tf\pi$,

\item $\pi$ is continuous,

\item $\pi$ is onto.

\end{itemize}

We take here the set of
sequences with the Tichonoff topology. Thus a periodic point in the symbolic dynamical system which is just a periodic sequence corresponds
to a periodic point in the original dynamical system.

To obtain the Markov partition in our case as in \cite{BH}, we thicken the edges of the graph to rectangles. In particular, the rectangles can be glued directly to each other without any junctions. This can be done in a smooth way, endowing $S_0$ with a compact metric space structure by giving a length and width to each rectangle, consistently. Each edge of the standard graph for the pair (figure \ref{pair}) corresponds to
one rectangle, except the edge which is mapped to the
loop around $y_0$. This edge we divide in two (this is necessary
to avoid having a rectangle intersecting twice an
inverse image of another rectangle). Now we have edges of 7
different types on the graph. The vertical loops of the graph
consist of long edges we denote as $A$ edges, and short edges we
call $B$ edges. The loops around the points of the $y$ orbit and vertical segments connecting the loops
to the diagonal edges we call $C$'s and
$D$'s respectively. In rectangles which contain two punctures and
therefore two diagonal edges we call the upper ones $L$ edges and
the lower ones $K$ edges in the first case, and the lower ones $L$ edges, upper ones $K$ edges, in the second case. The last type of edges are diagonals of
once punctured
rectangles, these we call $M$ edges.
\begin{figure}[h]
\includegraphics[width=6cm]{./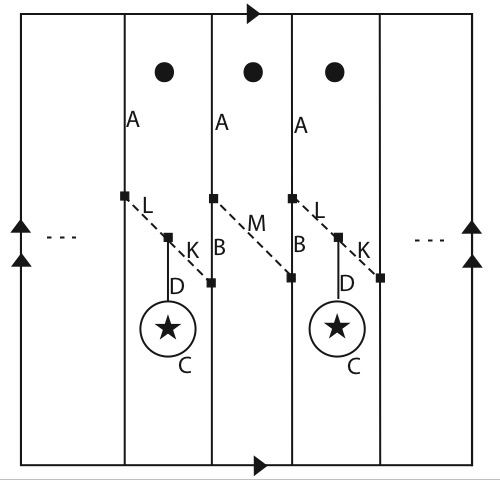}
\caption{}
\end{figure}\\

Next, we label the rectangles in order to have explicitly the
transition rules:
\begin{itemize}

\item For $0\leq i \leq p_2-1$ denote the rectangle corresponding
to the $D$ edge connecting the loop around $y_i$ to the diagonal
by $r_{i+1}$. Denote the rectangle corresponding to the $C$
edge which is the loop around $y_i$ by $r_{p_2+i+1}$.

\item For $1\leq i \leq p_2$, denote the rectangles corresponding
to the $L$ and $K$ edges connected to $r_{i}$ by $r_{2p_2+i}$
and $r_{3p_2+i}$ respectively.

 \item Denote the rectangle corresponding to the $A$ edge belonging
to the vertical line we referred to as m by $r_{4p_2+1}$ and the
$B$ edge which is part of the same line m as $r_{4p_2+p_1+1}$.

\item For the vertical line $f^i(m)$ denote it's $A$ and $B$
rectangles by $r_{4p_2+1+i}$ and $r_{4p_2+p_1+1+i}$ respectively
for all $1\leq i \leq p_1-2$

\item For the vertical line $f^{p_1-1}(m)$, denote it's $A$ edge
as by $r_{4p_2+p_1}$. There are two rectangles corresponding to
the $B$ edge as explained above, denote the lower one by
$r_{4p_2+2p_1}$ and the upper one by $r_{4p_2+2p_1+1}$.

\item Label the $p_1-p_2$ remaining rectangles corresponding to
the $M$ edges by starting with the first of these to the right of
m, and then continuing by the order along the horizontal axis,
denoting them by $r_{4p_2+2p_1+2}$, ... ,$r_{3p_1+3p_2+1}$
\end{itemize}

Finally, we can look at the diagram in figure \ref{diagram}, showing the set of rectangles and transitions in this Markov partition which we now use.
\begin{figure}[h]
\includegraphics[width=7cm]{./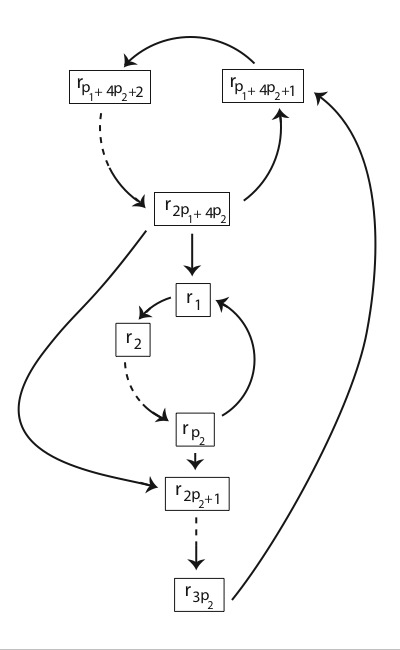}
\caption{Some of the rectangles in the Markov partition, where the arrows
denote allowed transitions between them}\label{diagram}
\end{figure}

A periodic symbolic sequence of
allowed transitions gives as explained a
periodic point in the original dynamical system. Therefore by this
diagram we can easily find other periodic orbits on the torus that
must exist for $f$. We will later prove that these orbits are in fact simple, but this 
will require some more work.
Hence, by this diagram we prove only existence of orbits with specified rotation numbers.
For every pair $(n,m)$ of natural numbers,
$n,m\neq 0$, by starting from $r_{p_1+4p_2+1}$, going n times
around the first loop in the diagram $\{r_{p_1+4p_2+1},\dots, r_{2p_1+4p_2}\}$,
then going m-1 times around the second loop $\{r_1,\dots, r_{p_2}\}$
(and skipping it if m=1) and then returning through the final
sequence $\{r_{2p_2+1},\dots,r_{3p_2}\}$ to $r_{p_1+4p_2+1}$, we get
a periodic symbolic allowed sequence, and so a new periodic orbit we denote $O_{n,m}$. These symbolic sequences are all different and hence so are the periodic orbits. We look at a point $p\in O_{n,m}$ such that $p$ is in the rectangle $r_{p_1+4p_2+1}$.
For the first $n\cdot{p_1}$ iterations of $p$, corresponding to each time the upper loop in the diagram appears in the symbolic sequence of $p$, the images are contained in the $B$ edges. 
The vertical loops are mapped under $f$ retaining the same
"horizontal distance" from the periodic points from the $x$ orbit to
their left. So, $p$ is mapped a total distance of $n\cdot q_1$ along the horizontal axis under $f^{n\cdot p_1}$.

Similarly, point $f^{np_1}(p)$, which lies in the rectangle $r_1$
corresponding to $D$ edge, is mapped a distance $q_2$ along the
horizontal axis under each iteration of $f^{p_2}$, for every
occurrence of the second loop in the symbolic sequence of $p$. This
is because the $D$ edges retain their distance from the $y$ orbit
points below them. The final sequence in the symbolic representation
of $p$ until the return to the first loop also corresponds to the
horizontal distance $q_1$. These last points of the periodic orbit lie
in rectangles corresponding to $L$ edges.
So $p$ is mapped a horizontal distance of $nq_1+mq_2$ under $f^{np_1+mp_2}$. Hence, the new orbit $O_{n,m}$
has rotation number $\frac{nq_1+mq_2}{np_1+mp_2}$. 

See \cite{Farey} for a proof that any two Farey neighbors span this way all rational numbers between them, that is all rationals between $\frac{q_1}{p_1}$ and 
$\frac{q_2}{p_2}$ are of the form $\frac{nq_1+mq_2}{np_1+mp_2}$. So we found a periodic orbit of any rational number between the two original rotation numbers $\frac{q_1}{p_1}$ and $\frac{q_2}{p_2}$.

Note we have found these simple periodic orbits for the Thurston-Nielsen canonical
form of the homeomorphism $f$ we started with. It remains to relate these periodic orbits to the periodic orbits of $f$ itself. Recall the following definition from \cite{AF}.

A periodic point $x_0\in S$ of period $p$ for homeomorphism $f_0$ is called
\emph{unremovable} if for each given homomorphism $f_1$ with $f_t:f_0\simeq f_1$
there is a periodic point $x_1$ of period $p$ for $f_1$and an arc
$\gamma:[0,1]\to S$ with $\gamma(0)=x_0$, $\gamma(1)=x_1$ and $\gamma(t)$ is a periodic point of period $p$ for $f_t$.

It was proven by Asimov and Franks in \cite{AF} that every periodic orbit of a pseudo-Anosov diffeomorphism is unremovable. Thus orbits found for the pseudo-Anosov representative exist for any other homeomorphism in its isotopy class.
This yields all these periodic
orbits exist for the original homeomorphism $f$ as well. Thus we get theorem \ref{Doeff's theorem} for our
specific case:

\begin{thm}\label{con:infty} If there exists a simple pair of orbits for a
homeomorphism $f$ of the torus of abelian Nielsen types $s$
and $t$ which are Farey neighbors, there exists a periodic
orbit for $f$ with abelian Nielsen type equal to $r$ for every
rational number $r$ between $s$ and $t$.
\end{thm}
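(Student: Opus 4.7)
The plan is to apply the Bestvina-Handel algorithm to the standard graph for the simple pair given in Figures \ref{pair} and \ref{pair_im}, obtain the pseudo-Anosov Thurston-Nielsen representative $\tf$ of $f$ rel the two orbits, and then read off the required periodic orbits from the Markov partition it produces. The final step will be to transfer these orbits from $\tf$ back to $f$ via an isotopy-stability result.

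First I appeal to Proposition \ref{no twists} to replace the invariant graph by one whose image under $f$ has no winding. As observed just before the theorem statement, the induced action on this graph is tight, irreducible, and admits no invariant non-trivial subforest, so it is already efficient: the Bestvina-Handel algorithm terminates with $\tf$ pseudo-Anosov, and we obtain the Markov partition $\{r_1,\dots,r_{3p_1+3p_2+1}\}$ by thickening each edge to a rectangle (splitting the edge that maps onto the loop around $y_0$ into two so that no rectangle meets a $\tf^{-1}$-image in two components). The resulting transitions are the ones drawn in Figure \ref{diagram}.

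Next, for any rational $r$ strictly between $s=q_1/p_1$ and $t=q_2/p_2$, the Farey-neighbor hypothesis together with the classical result of \cite{Farey} gives positive integers $n,m$ with $r=(nq_1+mq_2)/(np_1+mp_2)$. Using the transition diagram I build the admissible periodic sequence that starts at $r_{p_1+4p_2+1}$, traverses the upper cycle $\{r_{p_1+4p_2+1},\dots,r_{2p_1+4p_2}\}$ exactly $n$ times, then traverses the lower cycle $\{r_1,\dots,r_{p_2}\}$ exactly $m-1$ times, and finally closes through the return arc $\{r_{2p_2+1},\dots,r_{3p_2}\}$. The semi-conjugacy $\pi\sigma=\tf\pi$ furnishes a periodic point $O_{n,m}$ of $\tf$ of period $np_1+mp_2$. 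Tracking horizontal displacement on a lift: each pass through the upper cycle corresponds to a vertical loop being shifted exactly $q_1$ rectangles to the right, each pass through the lower cycle contributes $q_2$ because the $D$ and $C$ edges preserve their horizontal distance from the $y$-punctures, and the $L$-edge return arc adds the remaining $q_1$. Summing gives horizontal displacement $nq_1+mq_2$ in $np_1+mp_2$ iterations, so $O_{n,m}$ has rotation number (equivalently, by the Remark in Section \ref{simple section}, abelian Nielsen type) exactly $r$.

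The last step is to transfer $O_{n,m}$ from $\tf$ to $f$. The Asimov-Franks theorem \cite{AF} asserts that every periodic orbit of a pseudo-Anosov homeomorphism is unremovable under isotopy, and $\tf$ is by construction isotopic to $f$ relative to the two distinguished orbits; therefore $O_{n,m}$ persists as a periodic orbit of $f$ of the same period and the same rotation number, which is the required conclusion. The main obstacle I anticipate is the rotation-number bookkeeping along the Markov loops: one must verify that the rectangles labeled $A$, $B$, $C$, $D$, $L$ sit in the torus in the positions described in Section \ref{pairs} and are translated by the claimed horizontal amounts under a single iterate of $\tf$, so that periods and displacements add coherently. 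Everything else -- termination of Bestvina-Handel, the Markov-partition structure, the Farey-median decomposition, and Asimov-Franks unremovability -- is imported directly from the cited works.
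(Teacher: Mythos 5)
Your proposal reproduces the paper's own proof essentially step for step: remove winding via Proposition \ref{no twists}, observe the graph map is already efficient so Bestvina--Handel terminates in a pseudo-Anosov representative, extract the Markov partition of Figure \ref{diagram}, build the $O_{n,m}$ orbits by cycling $n$ and $m-1$ times through the two loops before the return arc, track horizontal displacement to get rotation number $(nq_1+mq_2)/(np_1+mp_2)$, invoke the Farey-mediant fact from \cite{Farey}, and transfer back to $f$ by Asimov--Franks unremovability. The only blemish, inherited from the paper's own wording, is the claim that the $L$-edge return arc contributes $q_1$ to the displacement; with $n$ upper passes ($nq_1$), $m-1$ lower passes ($(m-1)q_2$), and a $p_2$-step return, the arc must contribute $q_2$ for the total to equal $nq_1+mq_2$, as both you and the paper correctly state at the end.
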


\section{The order relation}\
\label{order}
For any simple pair $\wh{\frac{q_1}{p_1}}\vee \wh{\frac{q_2}{p_2}}$, The orbit $O_{1,1}$ out of the family of new orbits we constructed above has rotation number equal exactly to $\frac{q_1+q_2}{p_1+p_2}$. This orbit corresponds to the
symbolic sequence $r_{p_1+4p_2+1}\rightarrow
r_{p_1+4p_2+2}\rightarrow....\rightarrow r_{2p_1+4p_2}\rightarrow
r_{2p_2+1}\rightarrow....\rightarrow r_{3p_2}\rightarrow
r_{p_1+4p_2+1}$ as in the diagram in figure \ref{diagram}. So we have a list of rectangles,
each containing exactly one periodic point from the new orbit $O_{1,1}$. 
We denote the point of $O_{1,1}$ that is in a rectangle $r_j$ by $o_j$.
Graphically, assuming
the first case map, when we draw the rectangle decomposition
corresponding to the standard graph as in figure \ref{pair} we get Figure \ref{rec}. 
\begin{figure}[h]\centering
\includegraphics[width=7cm]{./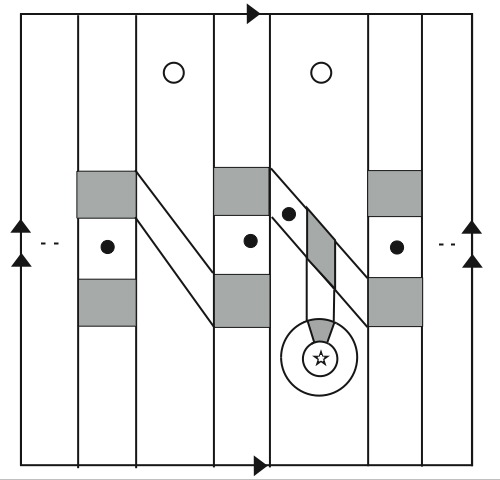}
\caption{The intermediate orbit denoted by black circles. The gray areas are junction, which can be deleted, and the rectangles can be glued directly to one another.}\label{rec}
\end{figure}

We will now show  that for any simple pair $\wh{\frac{q_1}{p_1}}\vee\wh{\frac{q_2}{p_2}}$ the
orbit $O_{1,1}$ is a simple orbit, and forms a simple pair with each periodic orbit of the pair, that is with $\widehat{\frac{q_1}{p_1}}$ and $\widehat{\frac{q_2}{p_2}}$ .
For the first assertion, we define a family of vertical loops as
follows: we choose a vertical loop that crosses both rectangles
corresponding to the $m$ line and passes to the right of the periodic point
$o_{p_1+4p_2+1}$. Denote this loop by $A$. It is shown graphically in figure \ref{new_orbit}. All
its images under $f$ until the $p_1$st iteration
are exactly of the same form, as the rectangles are
simply mapped to the right without changing their forms. Its
$p_1$st image is the first time it returns to the same rectangles, and is determined by the images of the corresponding vertical edges of the graph. These images are shown in figure \ref{pair_im}. 
We use the fact $f$ preserves orientation
to determine the relation between the image of the curve and the points of $O_{1,1}$. We denote this image by $B$. It is shown in figure \ref{new_orbit}. 

\begin{figure}[h]\centering
\includegraphics[width=7cm]{./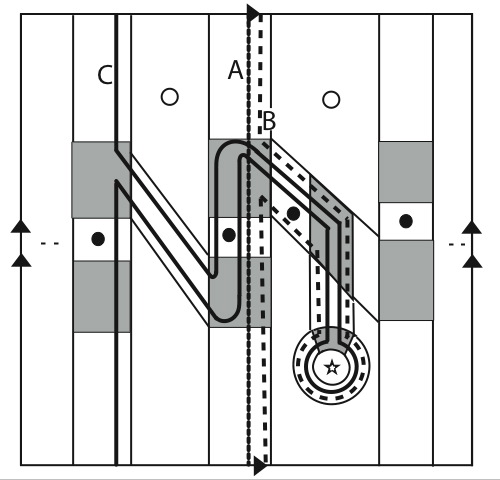}
\caption{The intermediate orbit with the family of vertical loops}
\label{new_orbit}
\end{figure}
By similar considerations, knowing
the rectangles containing $B$ in the original picture (Figures \ref{pair} and \ref{pair_im})
the rectangle adjacent
to $f^{p_1-1}(m)$ on the left contains a point of the $y$-orbit
and therefore contains a rectangle of type $L$ of Markov
partition. This rectangle contains the point $o_{3p_2}$ of the new
orbit. Line $A$ lies to the right of $m$ and of $o_{p_1+4p_2+1}$
therefore $f^{p_1-1}(A)$ lies to the right of $o_{3p_2}$ and to
the right of $o_{2p_1+4p_2}$. It follows that the line
$B=f^{p_1}(A)$ lies to the right of $o_{p_1+4p_2+1}$ and to the
right of $o_{2p_2+1}$, as shown on Figure \ref{new_orbit}. Also $B$ can be
isotoped to the right of $A$ relative to the points of the new
periodic orbit. Next $p_2-1$ iterations of $f$ translate $A$ and
$B$ and whole rectangle adjacent to $m$ on the right to the right.
We arrive at the rectangle adjacent to  $l$ on the left containing
point $o_{3p_2}$ of the new orbit. The point $o_{2p_1+4p_2}$ lies in a
rectangle on the line $l$. The loop $f^{p_2-1}(B)$ lies to the right of
$o_{3p_2}$ and to the left of $o_{2p_1+4p_2}$ therefore the loop
$C=f^{p_2}(B)$ lies to the right of $o_{p_1+4p_2+1}$ and to the
left of $o_{2p_2+1}$, as shown on Figure \ref{new_orbit}. Also $p_2$ iterations
of $f$ take line $m$ to a distance $p_2q_1=p_1q_2-1$ rectangles to
the right, which means one rectangle to the left of line $m$.
Since $A$ and $B$ are to the right of the point $o_{p_1+4p_2+1}$
in line $m$ and since this point moves to the leftmost point in
the new periodic orbit shown on Figure \ref{new_orbit}, loop $C$ must be to the
right of it, as in Figure \ref{new_orbit}. The point $o_{2p_1+1}$ may be above or
below the loop $C$ but this does not change the 
discussion bellow.

Note that if we disregard the orbits $x$ and $y$ of the original
pair we can isotop $C$ to $A$ relative the points of the new
orbit. This shows that the new orbit is a simple periodic orbit of
length $p_1+p_2$, by Lemma \ref{invariant loops}. Now we fill in the $x$ orbit (the
longer orbit) and consider a torus punctured at the $y$ orbit and
the new orbit together. We have the family of vertical loops
$f^i(A)$ and the action on it is exactly as in the condition for a
simple pair as the loop $C$ can be isotoped to $A$ plus a loop
around $y_0$. We choose a horizontal loop as the loop $D$ on
Figure \ref{new2}. Then its image $D'$ is as shown on Figure \ref{new2}. 
The image has the required properties. The orbit $y$
together with the new orbit form a simple pair for the
homeomorphism $f$.

\begin{figure}[h]\centering
\includegraphics[width=7cm]{./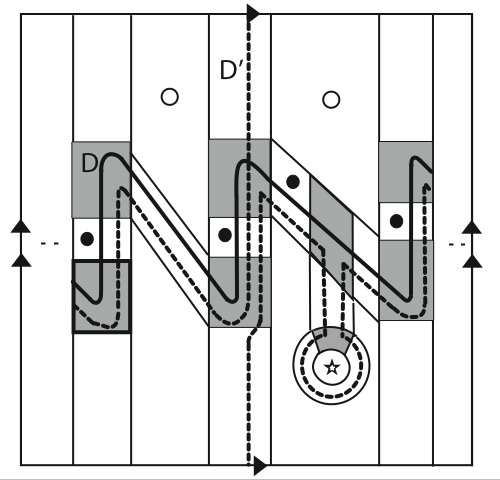}
\caption{}\label{new2}
\end{figure}

Next we fill in the $y$ orbit and leave punctures at the $x$ orbit
and the new orbit. We choose the initial vertical loop $A$
differently, as in Figure \ref{new and x}. This loop $A$ is one rectangle to the right
of $m$ plus a loop on the left. After $k=p_2-1$ iterations of $f$
it will move to line $l$ which is $q_1$ rectangles to the left of
$m$. Indeed it will move to $q_1(p_2-1)+1=p_1q_2-q_1$ rectangles
to the right of $m$ which means $q_1$ rectangles to the left. The
loop $f^k(A)$ looks like the loop $A$ and lies to the left of the
point $r_{3p_2}$ and to the left of the point $o_{2p_1+4p_2}$.
Next iteration of $f$ takes it to a curve which looks like $f(l)$
but lies to the left of $o_{p_1+4p_2+1}$ and to the left of
$o_{2p_2+1}$. Since we filled the point $y_0$ we can isotop this
loop to a vertical loop near $m$, which passes to the left of
$o_{p_1+4P_2+1}$. Subsequent iterations translate it to the right
and $f^{p_1}(A)$ is equal to curve $B$ on Figure \ref{new and x} 

\begin{figure}[h]\centering
\includegraphics[width=7cm]{./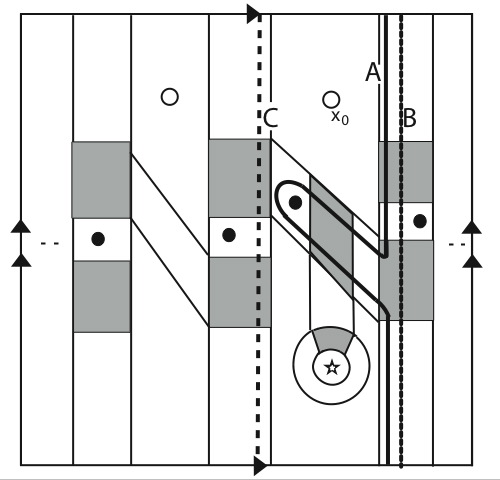}
\caption{}\label{new and x}
\end{figure}
Next $p_2-1$
iterations will take $B$ to the loop near $l$ which lies to the
left of $o_{2p_1+4p_2}$. Next iteration of $f$ takes this loop to
a loop similar to $f(l)$, but lies to the left of $o_{2p_2+1}$. Since
the points of $y$-orbit are filled we can isotop it to the loop
$C$ on Figure \ref{new and x}. It can be further isotoped, relative to the
$x$-orbit and the new orbit, to the loop $A$ plus a small loop
around $x_0$ to the left of $A$. If we choose the same horizontal
loop as in the previous case , with the same image as before, we
get the required action of $f$ for a simple pair consisting of the
$x$-orbit and the new orbit.

Now we can
continue by the same analysis for each of these two simple pairs, finding
their Farey intermediate to be a simple orbit as well
that forms a simple pair with each of them, and so on. It remains to prove the persistence of all these simple pairs under isotopies. For this, recall The following theorem from  \cite{Hall}.

\begin{thm}{\bf(Hall)} \label{hall_thm} Let $S$ be a closed surface and let $A$ be a finite subset
of $S$.
   Let $f$ be a homeomorphism of $S$ which leaves $A$ invariant. Let ${\bf
p}=x_1,\dots,x_k$ be a finite collection of periodic points for
$f$ which are essential, uncollapsible, mutually non-equivalent
and non-equivalent to points of $A$. Then the collection $\bf p$
is unremovable, which means that for every homeomorphism $g$
isotopic to $f$ rel $A$ there exists an isotopy $f_t$ rel $A$ and
paths $x_i(t)$ in $S$ such that $f_0=f$, $f_1=g$, $x_i(0)=x_i$,
$x_i(t)$ is a periodic point of $f_t$ of period equal exactly to
the period of $x_i$.\end{thm}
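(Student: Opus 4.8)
This is Hall's unremovability theorem from \cite{Hall}; since it is quoted rather than reproved here, I will only indicate the route I would take, which is the standard one. The plan is to push $f$ to its Thurston--Nielsen canonical form and thereby reduce the statement to the pseudo-Anosov and finite-order pieces, where unremovability is already understood, and then to verify that the hypotheses ``essential, uncollapsible, mutually non-equivalent, and non-equivalent to $A$'' are exactly what is needed to transport the conclusion back to $f$ and to keep the tracked orbits disjoint along the isotopy.

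First I would isotope $f$ rel $A$ to its canonical form $\phi$, with a system of reducing curves cutting $S\setminus A$ into pieces that are either pseudo-Anosov or of finite order, joined across annular neighborhoods of the reducing curves. The hypothesis that each $x_i$ is essential and uncollapsible guarantees that the associated Nielsen periodic-point class stays essential for $\phi$ and is not absorbed into a reducing annulus, so there is a genuine periodic point $x_i'$ of $\phi$ of the same period joined to $x_i$ by an arc of periodic points along the isotopy $f\simeq\phi$; since the $x_i$ are pairwise non-equivalent and non-equivalent to points of $A$, the $x_i'$ lie in distinct classes and can be followed independently. Next I would apply unremovability piece by piece: on a pseudo-Anosov piece every periodic orbit is unremovable by the Asimov--Franks theorem already quoted (\cite{AF}), with the period preserved exactly because the fixed-point index of the appropriate power of $\phi$ at such a point is $\pm 1$ and no point of smaller period lies in the class; on a finite-order piece an essential class again carries nonzero index for the relevant power and so forces a nearby periodic point of exactly that period for any isotopic map. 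Concatenating the arcs from the reduction step with the arcs produced here, and using transitivity of ``isotopic rel $A$'' to pass from $\phi$ to an arbitrary $g$ isotopic to $f$ rel $A$, produces the isotopy $f_t$ and the paths $x_i(t)$ demanded in the statement, with periods preserved exactly.

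The main obstacle is the gap between ``the total fixed-point index is conserved'' and ``a period-$p$ orbit persists as a period-$p$ orbit'': under a perturbation a period-$p$ orbit could a priori collapse onto an orbit of smaller period, or two of the tracked orbits could collide, even while all indices add up correctly. Controlling both is precisely the purpose of the essential/uncollapsible/non-equivalent hypotheses, so the real work is in checking that these combinatorial conditions on Nielsen classes survive the reduction to canonical form; this is the content of \cite{Hall}, which I would invoke rather than reproduce the full Nielsen-theoretic argument. In the application that follows one then only needs to verify that the simple orbits $O_{n,m}$ and the simple pairs built for the pseudo-Anosov representative are essential, uncollapsible, mutually non-equivalent, and non-equivalent to the punctures $x\cup y$ — which is immediate from the Markov structure — whereupon Theorem \ref{hall_thm} makes all of them persist for every homeomorphism realizing the original simple pair, and, combined with the transitivity of the Farey-mediant construction of Section \ref{order}, completes the proof of Theorem \ref{mainthm}.
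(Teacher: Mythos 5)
The paper itself does not prove Theorem \ref{hall_thm}; it quotes it from \cite{Hall}, remarking only that it generalizes the Asimov--Franks theorem and that the generalization was already hinted at in \cite{AF}. You recognize this and defer to Hall, which matches the paper's treatment, and your sketch of the Thurston--Nielsen reduction plus piecewise persistence is a fair outline of how Hall's argument is organized.

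Two small corrections are worth recording. First, the claim that ``the fixed-point index of the appropriate power of $\phi$ at such a point is $\pm 1$'' is not correct for pseudo-Anosov maps in general: at an interior $k$-prong singular periodic point the index of the unrotated return map is $1-k$, and at a regular periodic point it is $-1$ (or $+1$ for the relevant power depending on how the separatrices are permuted). What the Asimov--Franks argument actually uses is that the Nielsen class is essential, i.e.\ has \emph{nonzero} index, together with the uncollapsibility hypothesis that rules out a period drop; the specific value $\pm1$ is neither needed nor generally true. Second, in your last paragraph you say that verifying ``essential, uncollapsible, mutually non-equivalent, and non-equivalent to $A$'' for the orbits of the pseudo-Anosov representative ``is immediate from the Markov structure.'' The paper does not argue this from the Markov partition; it cites Boyland (\cite{Boy1}, Lemma~1 and Theorem~2.4), noting that a map which is pseudo-Anosov in the complement of $A$ is condensed, whence every periodic point is essential and uncollapsible and the required non-equivalences hold automatically. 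Your route would work, but Boyland's lemma is the shorter path and is the one the paper actually uses in the Corollary that follows.
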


(This theorem is a generalization of the main result of Asimov and Franks
in \cite{AF} to several periodic orbits. In fact this generalization was
mentioned in \cite{AF} as a remark with a hint of a proof.)

Recall also that  if $f$ is pseudo-Anosov in the complement of $A$
then it is condensed and by \cite{Boy1} Lemma 1 and Theorem 2.4 each periodic point is
uncollapsible and essential and points from different orbits are
non-equivalent and points disjoint from $A$ are not equivalent to
points of $A$.

\begin{cor}: Let $T$ be a torus and let $A$ be a finite subset of \  $T$. Let
$f$ be a shear-type homeomorphism of $T$ which is pseudo-Anosov in
the complement of $A$. Let g be a homeomorphism of $T$ isotopic to
$f$ in the complement of $A$. If ${x}$ is a simple periodic orbit
for $f$ then there exists a simple periodic orbit ${z}$ for $g$
with $\rho(z)=\rho(x)$. If $ x,y $ is a simple pair of periodic
orbits for $f$, one or both disjoint from $A$, then there exists a
simple pair of periodic orbits $ z,w$ for g with $\rho(z)=\rho(x)$
and $\rho(w)=\rho(y)$.\end{cor}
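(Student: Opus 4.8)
The plan is to combine the Hall unremovability theorem (Theorem \ref{hall_thm}) with the structural facts already established about pseudo-Anosov maps in the complement of a finite invariant set, and then deduce that the transported orbits remain \emph{simple}, not just present. First I would verify the hypotheses of Theorem \ref{hall_thm} in our setting: with $S=T$ and $A\subset T$ the given finite invariant set, the homeomorphism $f$ is pseudo-Anosov on $T\setminus A$, hence condensed, so by the cited results of Boyland (\cite{Boy1}, Lemma 1 and Theorem 2.4) every periodic point of $f$ is essential and uncollapsible, points on distinct orbits are non-equivalent, and points disjoint from $A$ are non-equivalent to points of $A$. Applying this to the collection $\mathbf{p}$ consisting of the points of the simple orbit $x$ (in the first assertion) gives an isotopy $f_t$ rel $A$ with $f_0=f$, $f_1=g$, and paths $x_i(t)$ tracking each point of $x$ to a periodic point of $g$ of the same period. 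Call the resulting $g$-orbit $z$.

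Next I must show $z$ is a simple orbit with $\rho(z)=\rho(x)$. The rotation number statement is the easy half: $\rho$ is a similarity invariant (as discussed after the definition of rotation number), and tracking an orbit along an isotopy rel $A$ — which, since $A$ may be taken to contain nothing that obstructs, induces an isotopy of $T$ rel nothing problematic — preserves the homology/abelian-Nielsen data, so $\rho(z)=\rho(x)=\frac{q}{p}$. For simplicity itself, I would argue that the family of invariant vertical loops witnessing that $x$ is a simple orbit can be carried along: more precisely, $x$ being simple means (by Lemma \ref{invariant loops}, or directly by definition) there is an $\tf$ isotopic to $f$ rel $x$ with an invariant family of vertical loops bounding annuli containing the orbit points. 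Under the isotopy $f_t$ rel $A$ tracking $x$ to $z$, one obtains a homeomorphism isotopic to $g$ rel $z$ together with the image family of loops, still vertical (the isotopy is through shear-type maps, so it preserves the standard axes up to isotopy), still bounding annuli each containing one point of $z$, and still invariant. Lemma \ref{invariant loops} then immediately yields that $z$ is simple. The same argument, run with $\mathbf{p}$ the union of the points of $x$ and $y$ (here we need that at least one of $x,y$ is disjoint from $A$ so that the non-equivalence hypotheses hold for the whole collection — this is exactly the stated restriction), gives a $g$-pair $z,w$ with $\rho(z)=\rho(x)$, $\rho(w)=\rho(y)$.

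It then remains to upgrade ``$z,w$ are two simple orbits'' to ``$z,w$ form a \emph{simple pair}''. The rotation numbers $\rho(z),\rho(w)$ are Farey neighbors since $\rho(x),\rho(y)$ were; and I would transport the simple-pair graph $G$ of Definition \ref{simple pair} along the isotopy $f_t$ rel $A$, exactly as for the single-orbit case: the graph $G$ lies in $T\setminus(x\cup y)$, the isotopy carries it to a graph $G'$ in $T\setminus(z\cup w)$ homeomorphic to $G_{p_1}$, each complementary rectangle still containing one point of $z$ and at most one of $w$, and the action of $g$ on $G'$ is conjugate (up to isotopy rel the orbits) to the action of $f$ on $G$, hence of the required form. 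This gives the simple-pair structure for $g$.

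The main obstacle I anticipate is not the unremovability input — that is essentially quoted — but the bookkeeping needed to confirm that \emph{simplicity is preserved} under the tracking isotopy: one must be careful that carrying the invariant loop family (or the graph) along an ambient isotopy rel $A$ genuinely produces a family that is (i) isotopic to vertical in the standard axes for $g$, and (ii) invariant under a homeomorphism isotopic to $g$ rel the new orbit, rather than merely rel $A$. The point (ii) is the delicate one: Hall's theorem gives an isotopy rel $A$ moving points of $x$ to points of $z$, but simplicity is a condition rel the orbit itself; reconciling these requires observing that once $z$ is pinned down as the endpoint of the tracking paths, the final-time homeomorphism $g$ together with the transported loop family can be further adjusted rel $z$ (not just rel $A$) to the standard form, using again that the mapping class group of the complementary annuli is generated by a single twist, exactly as in the proof of Lemma \ref{invariant loops}. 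Once this is in place, the corollary follows, and with it the global forcing statement of Theorem \ref{mainthm}.
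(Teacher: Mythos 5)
The strategy is right up to the application of Hall's theorem and the Boyland inputs verifying its hypotheses, and you have correctly located the delicate point: Hall's theorem produces an isotopy $f_t$ rel $A$, while simplicity is a condition rel the orbit itself. But your resolution of that difficulty — ``transport the loop family along the isotopy $f_t$, then further adjust rel $z$ using the twist generator of the annulus mapping class group'' — is not actually a proof, and the way you phrase it conflates two different kinds of isotopy. The path $f_t$ is a path in the space of homeomorphisms of $T$; it does not, by itself, move any loops or graphs around the surface, so the phrase ``the image family of loops'' under $f_t$ has no meaning. The intermediate maps $f_t$ also need not preserve any fixed loop family, so the ``transported'' family is not invariant under anything along the way, and what remains at time $1$ is a family of loops in $T\setminus z$ with no a priori compatibility with $g$. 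The final sentence (``can be further adjusted rel $z$\dots'') is a hope, not an argument.

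The missing idea is the standard conjugation trick. The points of the tracked orbits sweep out a braid on $p+q$ strands in $T\times[0,1]$ as $t$ varies. Extend this motion to an ambient isotopy $h_t$ of $T$ fixed on $A$ with $h_0=\mathrm{id}$, and set $F_t=h_t^{-1}f_t h_t$. A short computation shows that each $F_t$ fixes the original point sets $x$ and $y$, so $F_t$ is an isotopy from $f=F_0$ to $F_1$ rel $x\cup y$. Since the notion of simple orbit / simple pair depends only on the isotopy class rel the orbit(s) (a $\tilde f$ isotopic to $f$ rel $x\cup y$ is equally isotopic to $F_1$ rel $x\cup y$), it follows that $x,y$ is a simple pair for $F_1$. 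Finally, $F_1=h_1^{-1}gh_1$, so conjugating back gives that $h_1(x),h_1(y)$ is a simple pair for $g$; rotation numbers are preserved because $h_1$ is isotopic to the identity. This mechanism is what turns ``the orbit persists'' into ``the orbit persists as a simple orbit,'' and without it your argument has a genuine gap exactly where you flagged the obstacle.
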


Proof. Chose points $x_1$ and $y_1$ from the orbits $x$ and $y$ .
By Theorem \ref{hall_thm} there exists an isotopy $f_t$ and paths $x_1(t)$ and
$y_1(t)$ such that $x_1(0)=x_1$,  $y_1(0)=y_1$, $x_1(t)$ is a
periodic point of $f_t$ of a fixed order $p$ for all $t$  and
$y_1(t)$ is a periodic point of $f_t$ of a fixed order $q$ for all
$t$ and $y_1(t)=y_1(0)$ for all $t$ if $y(0)\in A$. For a given
$t$ all points in the orbits of $ x_1(t)$ and $ y_1(t)$  for $f$
are distinct, they form a braid with $p+q$ strands. They move when
$t$ changes and their movement can be extended to an ambient
isotopy $h_t$ which is fixed on $A$. Then
$h_t(f^i(x_1(0))=f_t^i(x_1(0))$ and
$h_t(f^i(y_1(0))=f_t^i(y_1(0))$. Consider isotopy
$F_t=h_t^{-1}f_th_t$. We have $F_t(f^i(x_1(0))=f^{i+1}(x_1(0))$
and $F_t(f^i(y_1(0))=f^{i+1}(y_1(0))$ so $F_t$ is fixed on the
orbits $x$ and $y$. In particular $x$ and $y$ form a simple pair
of periodic orbits for $F_1$ (or $x$ forms a simple periodic orbit
for $F_1$ if there is no $y$). But $F_1=h_1^{-1}gh_1$ so $h_1(x)$
and $h_1(y)$  form a simple pair of periodic orbits for $g$.

This concludes the proof of Theorem \ref{mainthm}.
\section{Global analysis of the kicked accelerated particle system \label{global}}

The physical system called the kicked accelerated particle consists of 
particles that do not interact with one another. They are subject to gravitation and so fall downwards, and are kicked by an
electro-magnetic field, i.e., the electro magnetic field is turned on for a very short time once in a fixed time interval. This electromagnetic field is a sine function of the height of the particle, hence the particles are kicked upwards or downwards by different amounts, depending on their position at the time of a kick.
For a short review of the results
for this system see \cite{review}. Experiments of this system were conducted
by the Oxford group, see \cite{ox}, and the system was found to
show a phenomena that is now called "quantum accelerator modes":
as opposed to the natural expectation that particles fall with
more or less the gravitational acceleration, it was found that a
finite fraction of the particles fall with constant nonzero
acceleration relative to gravity, as can be seen in Figure \ref{exp}
\begin{figure}[h]\centering
\includegraphics[width=13cm]{./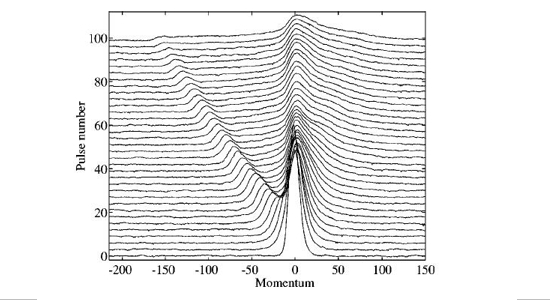}
\caption{Accelerator modes} Experimental Data (taken from Oberthaler, Godun, d'Arcy, Summy and Burnett, see \cite{ox}) showing the
number of atoms with specified momentum relative to the free
falling frame as the system develops in time (the numbers on the
$y$ axis represents time by the number of kicks, while the $z$ coordinate is
proportional to the number of atoms)\label{exp}
\end{figure}\\
This is a truly quantum phenomenon having no counterpart in the
classical dynamics. A theoretical explanation for this phenomenon
was given by Fishman, Guanieri and Rebuzzini in \cite{FGR}, and it
establishes a correspondence between accelerator modes of the
physical system, and periodic orbits of the classical map

\bc \bea
f:\left(%
\begin{array}{c}
  J \\
  \theta \\
\end{array}%
\right)\mapsto \left(%
\begin{array}{c}
  J+\tilde{k}sin(\theta+J)+\Omega \\
  \theta+J \\
\end{array}%
\right)\ mod2\pi \label{f} \eea \ec

Where the $J$ coordinate corresponds to the particles momentum, and $\theta$ to its coordinate. This  map is of shear type, and the acceleration for a periodic
orbit with rotation number $\frac{q}{p}$ is given by
\bc \bea
\alpha=\frac{2\pi q}{p}-\Omega \label{acceleration value}
\eea \ec

Hence, by analyzing the structure of existence of periodic orbits
for the classical map above, we would be able to find which modes
should be expected for which values of the parameters $k$ and
$\Omega$. We remark that actual experimental observation also
requires stability of the periodic orbits. It is important to
stress here that since these parameters correspond to the kick
strength and the time interval between kicks they can be
controlled in the experiments as we wish, so results obtained
for this system can be tested experimentally. 
When one plots the numerical results describing which periods
exist for different values of $k$ and $\Omega$ one gets an
extremely complicated figure, see figure \ref{tongues}.
\begin{figure}[h]\centering
\includegraphics[width=13cm]{./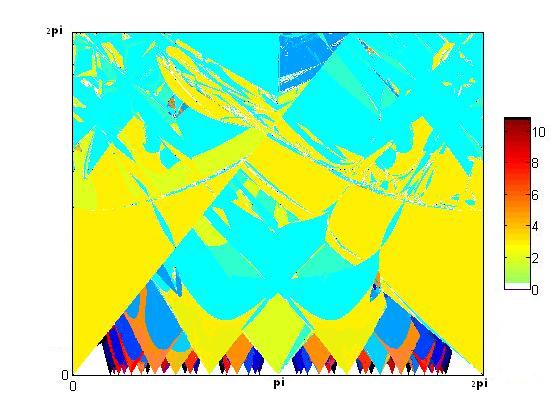}
\caption{Tongues of periodic orbits \label{tongues}}
\end{figure}

An exact mathematical analysis of this system is extremely
complicated. Perturbative methods have been used in \cite{FGR} to
analyze the existence of these "tongues" of periodic orbits in the
region where $k\rightarrow 0$, as well as giving estimates on
their widths. 

Look at the map $f$ given by (\ref{f}) in regions
where $\Omega$ is equal $\frac{q}{p} 2\pi$ for some rational
number $\frac{q}{p}$ in the unit interval, and small $k$. For a
small enough $k$ it can be seen both from the numerical results
shown graphically in figure \ref{tongues} and from perturbative
arguments that in the above region a periodic orbit with period
$p$ exists. 

For small $k$ the periodic points of this orbit 
must be pretty much equally spaced along the $J$ axis, and we can
choose (for $k$ small enough) a family of vertical loops
that are
equally spaced at distance exactly $\Omega$ apart, and each is at
distance at least, say, $3k$ from any of the periodic points.

The image of a
loop parameterized by $\Gamma_1(\theta)=\left(%
\begin{array}{c}
  J_0 \\
  \theta \\
\end{array}%
\right)$ is given by\\
$\left(%
\begin{array}{c}
  J_0+k sin(J_0+\theta)+\Omega \\
  J_0+\theta \\
\end{array}%
\right)=\left(%
\begin{array}{c}
  J_0+k sin(\theta') +\Omega \\
  \theta' \\
\end{array}%
\right)$ and so is very close (for small $k$) to another loop of the chosen family. It follows that there exists a map $\tf$ isotopic
to $f$ rel the orbit which keeps this family of curves invariant, and so, by
Lemma \ref{invariant loops}, all the periodic orbits seen in the
tips of the tongues in Figure \ref{tongues} are simple orbits.

Note the rotation number of each of
these orbits is equal exactly to the value of $\Omega$ in the tip
of the tongue ($k=0$) as for very small $k$ the $J$ coordinate
increases by an almost fixed value, close as we wish to $\Omega$.
And, by equation (\ref{acceleration value}) the rotation number $\frac{q}{p}$ is related to the acceleration of the
corresponding acceleration mode by

\bc $\alpha=\frac{q}{p}2\pi-\Omega$ \ec

So the topological meaningful numbers here are in fact also the
ones with physical significance. While $\Omega$
changes through the region in which this periodic orbit exists,
$\frac{q}{p}$ is of course a topological invariant and
therefore fixed. Hence the acceleration
vanishes on the line with fixed $\Omega$ in the middle of each tongue, and changes signs when one
crosses this line. This was measured experimentally in \cite{exp}.

For any other point higher in the tongue which we can reach by
an isotopy along which the periodic orbit exists, we also have the
orbit is a simple orbit. We will assume, as is very natural and was checked 
numerically for many cases, that the orbits remain simple throughout
the region of each tongue.

In some of the cases for which we drew a portrait of
the phase space, we found that the fact the homeomorphism is
isotopic to one which is reducible rel the periodic orbit is
realized by the physical map itself, as seen in Figure \ref{2}.
\begin{figure}[h]\centering
\includegraphics[width=11cm]{./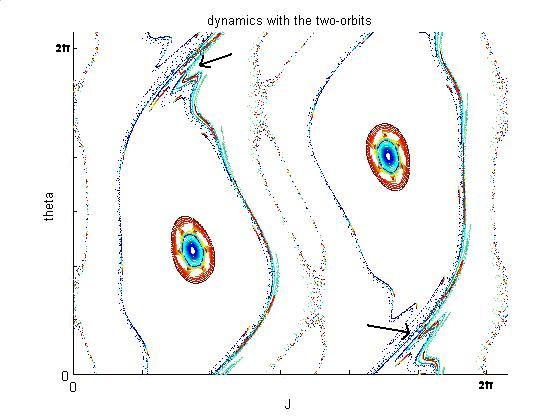}
\caption{Phase portrait for a two-orbit}\label{2} Drawn for
$k=\frac{1}{15}2\pi$ and $\Omega=\pi$, the two-orbit which is
clearly seen is a stable orbit with two stable neighborhoods
drawn. There is another two-orbit present, at which the arrows
point, and it is the stable and unstable manifolds for this
unstable orbit which divide the phase space into non intersecting
regions which do not mix.
\end{figure}

Here the phase space is truly divided into pieces. Each of the annuli in this decomposition is
mapped to another, and returns to itself with one twist
after $p$ iterations of $f$. Therefore every periodic orbit must
have a period which is a multiple of $p$. On the other hand, when an annulus is mapped to itself with
one twist under an area preserving map (here under $f^p$), every rotation number in the unit
interval exists for it (here we mean the standard annulus rotation number measuring the rotations around the annulus), and so every
period exists, as for every rational number $\frac{n}{m}$ there is
a periodic point of order $m$ which rotates $n$ times around the
annulus before it returns to itself. This yields that for such a point in the parameter space, exactly all periods that are multiples of $p$ exist. 
It is our belief that this situation is typical for the
center of each tongue, that is for $\Omega=2\pi\frac{q}{p}$. At
other points, namely in all point we have numerically checked
outside the center of the tongue, orbits of coprime lengths may
exist simultaneously.
We believe that the coexisting orbits
whose rotation numbers are Farey neighbors form a simple pair
together, as in the example on Figure \ref{2and3}, which shows a simple
pair of orbits with rotation numbers $1/3$ and $1/2$ found in the
physical system.
\begin{figure}[h]\centering
\includegraphics[width=11cm]{./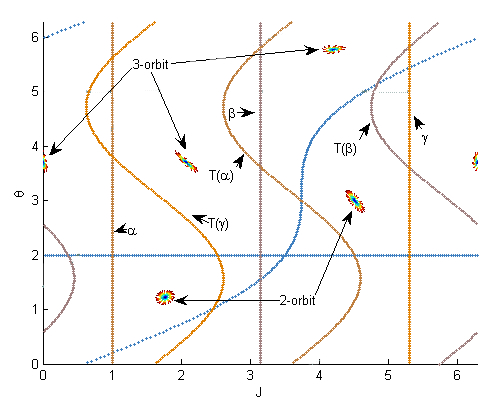}
\caption{A pair of coexisting orbits in the physical system}\label{2and3} Drawn with a collection
of curves on the torus and their images, which show this is a simple pair.
\end{figure}

This coexistence happens at a point in Figure \ref{tongues} for which two
tongues intersect. We assume
that the same orbit persists throughout the tongue, and therefore we have at such a point
two coexisting simple orbits.
We believe
that in all points of intersecting tongues coming from $k=0$ and
$\Omega_1=\frac{q_1}{p_1}$, $\Omega_2=\frac{q_2}{p_2}$ which are
Farey neighbors, $p_1>p_2$, the coexisting orbits form a simple
pair. 

Theorem \ref{con:infty} therefore implies that there are
infinitely many periodic orbits for the parameters at a region of
intersection of two such tongues, with rotation numbers equal to all rational numbers between the ones of these
two tongues. If we assume all these simple orbits present also
come from tongues,  this yields that each rational tongue between
$\frac{q_1}{p_1}$ and $\frac{q_2}{p_2}$ intersects each of
these two tongues lower (along the $k$ axis) than they intersect
each other. In other words, following a path from a tip of a tongue upwards in the tongue, if it intersects a Farey neighbor tongue we know it intersects earlier all tongues of rational numbers between them.
This
determines the global structure appearing in
Figure \ref{tongues} of all accelerator modes in the physical system, as Sharkovskii's theorem determines it for one dimensional systems.

\vskip 1cm

Tali Pinsky

Department of Mathematics

The Technion

32000 Haifa, Israel

e-mail:  otali@tx.technion.ac.il

\par\bigskip

Bronislaw Wajnryb

Department of Mathematics 

Rzeszow University of Technology

ul. W. Pola 2, 35-959 Rzeszow, Poland

e-mail:  dwajnryb@prz.edu.pl
\par\bigskip

\end{document}